\documentclass[11pt]{amsart}
\usepackage{graphicx}
\usepackage[usenames]{color}
\usepackage{amsmath,amssymb,dsfont}
\usepackage{mathrsfs}
\vfuzz2pt 
\hfuzz2pt 
\newtheorem{thm}{Theorem}[section]

\newtheorem{lem}[thm]{Lemma}

\theoremstyle{definition}
\newtheorem{defn}[thm]{Definition}
\theoremstyle{remark}
\newtheorem{rem}[thm]{Remark}
\numberwithin{equation}{section}

\begin{document}

\title[Two-dimensional Invisibility Cloaking]{\bf Two dimensional invisibility cloaking via transformation optics}%

\author{Hongyu Liu}%
\address{Department of Mathematics, University of Washington, Box 354350, Seattle, WA, 98195--4350}%
\email{hyliu@math.washington.edu}%

\author{Ting Zhou}%
\address{Department of Mathematics, University of Washington, Box 354350, Seattle, WA, 98195--4350}%
\email{tzhou@math.washington.edu}%

\thanks{The work of HYL is partly supported by NSF grants, FRG DMS 0554571 and DMS 0758537}%
\keywords{invisibility cloaking, transformation optics, finite energy solutions, singularly weighted Sobolev space}%

\date{May 05, 2010}%
\begin{abstract}
We investigate two-dimensional invisibility cloaking via
transformation optics approach. The cloaking media possess much more
singular parameters than those having been considered for
three-dimensional cloaking in literature. Finite energy solutions
for these cloaking devices are studied in appropriate weighted
Sobolev spaces. We derive some crucial properties of the singularly
weighted Sobolev spaces. The invisibility cloaking is then justified
by decoupling the underlying singular PDEs into one problem in the
cloaked region and the other one in the cloaking layer. We derive
some completely novel characterizations of the finite energy
solutions corresponding to the singular cloaking problems.
Particularly, some `hidden' boundary conditions on the cloaking
interface are shown for the first time. We present our study for a
very general system of PDEs, where the Helmholtz equation underlying
acoustic cloaking is included as a special case.
\end{abstract}
\maketitle
\section{Introduction}

A region is said to be \emph{cloaked} if its contents together with
the cloak are indistinguishable from background space to certain
exterior detections. Blueprints for making objects invisible to
electromagnetic waves were proposed by Pendry {\it et al.}
\cite{PenSchSmi} and Leonhardt \cite{Leo} in 2006. In the case of
electrostatics, the same idea was discussed by Greenleaf {\it et
al.} \cite{GLU2} in 2003. The works \cite{GLU2,Leo,PenSchSmi} rely
on {\it transformation optics} for the construction of cloaking
devices, which we shall further examine in the present paper. For
state-of-the-art surveys on the rapidly growing literature and many
applications of transformation optics, we refer to
\cite{GKLU5,Nor,YYQ}.

In transformation optics, the key ingredient is that optical
material parameters have transformation properties which could be
{\it pushed forward} to form new material parameters. Then, to
construct cloaking devices, the idea is to \emph{blow up} a point in
the background space to form the cloaked region. The ambient
background medium is then \emph{pushed forward} to form the cloaking
medium. Since the blowing-up transformation is singular, the
resulting cloaking medium is inevitably singular. Two theoretical
approaches have come out to handle the singular cloaking problems.
In Kohn {\it et al.} \cite{KSVW}, the authors introduce the notion
of {\it near-invisibility} cloaking in electrostatics from a
regularization viewpoint. The singular ideal cloaking is regarded as
the limit of the regular near-invisibility cloaking depending on
certain regularizer. For acoustic cloaking, the near-invisibility is
investigated in \cite{KOVW,Liu,Ngu} in both two and three
dimensions. In \cite{GKLU}, Greenleaf {\it et al.} proposed to
investigate the physically meaningful solutions, i.e. {\it finite
energy solutions}, corresponding to degenerate differential
equations underlying the three-dimensional cloaking. The proposal
has been shown to work for both acoustic and electromagnetic
cloaking, and can treat cloaking of passive objects as well as
active/radiating objects. On the other hand, the analysis in
\cite{GKLU} is conducted in the geometric setting by taking
advantage of the one-to-one correspondence in $\mathbb{R}^3$ between
the optical parameters and a smooth Riemannian metric. This argument
does not carry over to $\mathbb{R}^2$. As one shall see in Section
2, for two-dimensional cloaking, the cloaking medium has both
degeneracy and blow-up singularities at the cloaking interface,
making the problem more difficult to analyze.

In this paper, we consider the two-dimensional invisibility cloaking
for a very general system of second order partial differential
equations. The Helmholtz equation underlying the acoustic cloaking
is included as a special case. In order to handle the singular
cloaking problem, we follow the finite energy solutions approach
from \cite{GKLU}. Recently, Hetmaniuk and Liu \cite{HetmaniukLiu}
introduce weighted Sobolev spaces with degenerate weights for
three-dimensional acoustic cloaking problems, encompassing and
generalizing the idea of finite energy solutions approach. For the
present two-dimensional cloaking problems, we study weighted Sobolev
spaces with more severely singular weights. The invisibility
justification follows by study of weak solutions from the introduced
weighted Sobolev space to the underlying singular PDEs. Our analysis
is given in a very general setting. The cloaking is shown to work
for arbitrary positive frequency and can cloak both passive media
and source/sink inside the cloaked region. Since the cloaking media
are more singular than those for the three-dimensional cloaking, we
derive completely different and novel characterizations of solutions
to the underlying wave equations compared to those derived for the
three-dimensional acoustic cloaking in \cite{GKLU,HetmaniukLiu}.
Moreover, some `hidden' boundary conditions on the cloaking
interface are shown for the first time, giving more insights into
the invisibility cloaking.

In this paper, we focus entirely on transformation-optics-approach
in constructing cloaking devices. But we mention in passing the
other promising cloaking schemes including the one based on
anomalous localized resonance \cite{MN}, and another one based on
special (object-dependent) coatings \cite{AE}. The rest of the paper
is organized as follows. In Section 2, we introduce the
transformation optics and invisibility cloaking and give the
construction of the two-dimensional radial cloaking devices.
Section~3 is devoted to the analysis of the radial cloaking by
considering finite energy solutions in singularly weighted Sobolev
spaces. In Section~4, we extend our study to general invisibility
cloaking.

\section{Transformation optics and radial invisibility
cloaking}\label{sect:2}

We first fix notations for some function spaces which are crucial
for our study. Let $\Omega$ be a bounded Lipshitz domain in
$\mathbb{R}^2$. Let $m\geq 1$ be an integer, and
$u:\Omega\rightarrow\mathbb{C}^m$ be a complex vector-valued
function. $L^p(\Omega)^m$ is the space consisting of
$\mathbb{C}^m$-valued measurable functions whose components belong
to $L^p(\Omega)$. Following Schwartz, put
$\mathscr{E}(\Omega)^m=C^\infty(\Omega)^m$ and
$\mathscr{D}(\Omega)^m=C_{comp}^\infty(\Omega)^m$ be complex-valued,
smooth test function spaces. Let $H^s(\Omega)$ denote the standard
Sobolev space of order $s$ and
\[
\widetilde{H}^s(\Omega)=\mbox{closure of $\mathscr{D}(\Omega)$ in
$H^s(\mathbb{R}^n)$}.
\]
Note that $\widetilde{H}^{-s}(\Omega)$ is an isometric realization
of $H^s(\Omega)^*$ for $s\in\mathbb{R}$. The definition of the
vector Sobolev spaces on $\Omega$, $H^s(\Omega)^m=H^s(\Omega;
\mathbb{C}^m)$ and
$\widetilde{H}^s(\Omega)^m=\widetilde{H}^s(\Omega;\mathbb{C}^m)$
etc. shall now be obvious.

Next, we introduce the second-order partial differential operator
(PDO) $\mathcal{P}$ of the form
\begin{equation}\label{eq:pdo}
\mathcal{P}u=-\sum_{\alpha,\beta=1}^2\frac{\partial}{\partial
x_\alpha}\left(A^{\alpha\beta}\frac{\partial u}{\partial
x_\beta}\right)-\omega^2 Bu\ \ \mbox{on $\Omega$},
\end{equation}
where the coefficients
\begin{equation}
A^{\alpha\beta}=[a_{pq}^{\alpha\beta}]\in L^\infty(\Omega)^{m\times
m},\ \ B=[b_{pq}]\in L^\infty(\Omega)^{m\times m}
\end{equation}
are functions from $\Omega$ into $\mathbb{C}^{m\times m}$, the space
of complex $m\times m$ matrices. Here $\omega\in\mathbb{R}$ and
$m\geq 1$ is an integer and thus, $\mathcal{P}$ acts on a (column)
vector-valued function $u:\Omega\rightarrow \mathbb{C}^m$ to give a
vector-valued function $\mathcal{P}u:\Omega\rightarrow\mathbb{C}^m$,
whose components are
\[
(\mathcal{P}u)_p=-\sum_{k=1}^m\sum_{\alpha,\beta=1}^2
\partial_\alpha\left(a_{pk}^{\alpha\beta}\partial_\beta
u_k\right)-\omega^2 \sum_{q=1}^m b_{pq}u_q.
\]
In the sequel, let $A:=[A^{\alpha\beta}]\in\mathbb{C}^{2m\times 2m}$
be the block matrix, and $\mathcal{P}_{[A,B]}$ be the PDO
(\ref{eq:pdo}) associated with $A$ and $B$. For the present study,
we always assume that
\begin{equation}\label{eq:symmetric}
(A^{\alpha\beta})^*=A^{\beta\alpha}\ \ \mbox{for\ \
$\alpha,\beta=1,2$},
\end{equation}
where $*$ denotes the conjugate transpose of a matrix or vector.
Moreover, we introduce the following algebraic conditions for the
coefficient matrices $A^{\alpha\beta}$ and $B$. Let
$0<c_1<c_2<\infty$ be two constants. For all $x\in\Omega$, and
arbitrary $\xi_1,\xi_2\in\mathbb{C}^m\ \mbox{and}\
\eta\in\mathbb{C}^m$, we have
\begin{align}
&c_1\sum_{l=1}^2|\xi_l|^2\leq\Re\sum_{\alpha,\beta=1}^2[A^{\alpha\beta}(x)\xi_\beta]^*\xi_\alpha\leq
c_2\sum_{l=1}^2|\xi_l|^2,\label{eq:algebraic}\\
&\qquad\qquad c_1|\eta|^2\leq \Re[B\eta]^*\eta\leq
c_2|\eta|^2.\label{eq:algebraic2}
\end{align}
Next, we associate $\mathcal{P}_{[A,B]}$ with a sesquilinear form
$\mathcal{Q}_{[A,B]}$, defined by
\begin{equation}\label{eq:sesqui}
\mathcal{Q}_{[A,B]}(u,v)=\int_\Omega\left(\sum_{\alpha,\beta=1}^2(A^{\alpha\beta}\partial_\beta
u)^*\partial_\alpha v-(Bu)^*v\right) dx.
\end{equation}

Now, we are ready to present the PDE system for our study,
\begin{equation}\label{eq:pde system}
\mathcal{P}_{[A,B]} u= f\ \ \mbox{on\ $\Omega$},
\end{equation}
where $f\in \widetilde{H}^{-1}(\Omega)^m$. The Dirichlet-to-Neumann
(DtN) map
\begin{equation}\label{eq:dtn}
\Lambda_{A,B,f}^\omega:\ H^{1/2}(\partial\Omega)^m\rightarrow
H^{-1/2}(\partial\Omega)^m,
\end{equation}
associated with (\ref{eq:pde system}) is defined by
\begin{equation}\label{eq:dtn2}
\Lambda_{A,B,f}^\omega(h)=\sum_{\alpha,\beta\leq 2}\nu_\alpha
A^{\alpha\beta}\gamma(\partial_\beta u)\ \ \mbox{on \
$\partial\Omega$},
\end{equation}
where $\nu=(\nu_\alpha)_{\alpha=1}^2$ is the outward unit normal of
$\partial\Omega$, $u\in H^1(\Omega)^m$ solves (\ref{eq:pde system})
with $u|_{\partial\Omega}=h$ and $\gamma$ is the trace operator for
$\Omega$. The weak solution in (\ref{eq:dtn2}) is variationally
given by
\begin{equation}\label{eq:var form}
\mathcal{Q}_{[A,B]}(u,\varphi)=\langle
f,\varphi\rangle_\Omega:=\int_{\Omega} f^*\varphi\ dx.
\end{equation}
Due to (\ref{eq:algebraic}) and (\ref{eq:algebraic2}), we know that
$\mathcal{Q}_{[A,B]}$ (and so is $\mathcal{P}_{[A,B]}$) is {\it
coercive} on $H^1(\Omega)^m$ in the sense that
\begin{equation}\label{eq:coercivity}
\Re\mathcal{Q}(u,u)\geq
c_1\|u\|_{H^1(\Omega)^m}^2-c_2\|u\|_{L^2(\Omega)^m}^2\quad\mbox{for\
$u\in H^1(\Omega)^m$}.
\end{equation}
Hence, (\ref{eq:var form}) is uniquely solvable except at a discrete
set of eigenvalues for $\omega^2$ (see, e.g. \cite{McL}). In fact,
we know that $\Lambda_{A,B,f}^\omega$ is a well-defined continuous
and invertible operator provided (\ref{eq:algebraic}) and
(\ref{eq:algebraic2}) are satisfied and $\omega^2$ avoids the
(discrete set of) eigenvalues (cf. \cite{McL}).

In the case $m=1$ and $A, B$ are both real, (\ref{eq:pde system}) is
the scalar Helmholtz equation.
%
%
It describes the time-harmonic solutions $p(x)=u(x)e^{-i\omega t}$
of the scalar wave equation $p_{tt}-B^{-1}\nabla\cdot(A\nabla
p)=f(x)e^{-i\omega t}$. Here $f$ represents a source/sink inside the
region $\Omega$. $A$ and $B$ are the acoustic material parameters of
the medium supported in $\Omega$, related respectively to, density
tensor and modulus. For a {\it regular} acoustic medium,
(\ref{eq:algebraic}) and (\ref{eq:algebraic2}) are the physical
condition on the material parameters. According to our earlier
discussion, these are also mathematical conditions to guarantee the
well-posedness of the underlying Helmholtz equation. In the sequel,
we let $\{\Omega; A, B, f\}$ denote the medium and the source/sink
supported in $\Omega$. This is the prototype problem of our present
study. Moreover, we are concerned with the inverse problems of
identifying the inside object $\{\Omega; A, B, f\}$ by the exterior
wave measurements, which are encoded into the DtN operator
(\ref{eq:dtn}). The inverse problems have widespread practical
applications in science and engineering, and have received extensive
and intensive investigations in last years
(see, e.g., \cite{Isa,U}). In this context, an invisibility cloaking
device is introduced as follows (see also \cite{GKLU,KOVW}).

\begin{defn}\label{def:cloaking device}
Let $\Omega$ and $D$ be bounded domains in $\mathbb{R}^2$ with
$D\Subset\Omega$. $\Omega\backslash\bar{D}$ and $D$ represent,
respectively, the cloaking region and the cloaked region.
$\{\Omega\backslash\bar{D}; A_c,B_c, f_c\}$ is said to be an {\it
invisibility cloaking device} for the region $D$ with respect to the
regular reference/background space $\{\Omega; A_b,B_b,f_b\}$ if
\[
\Lambda_{A_e,B_e,f_e}^\omega=\Lambda_{A_b,B_b,f_b}^\omega\ \
\mbox{for all $\omega>0$},
\]
where the extended medium $\{\Omega; A_e,B_e\}$ and the extended
source $f_e$ are given by
\[
\{\Omega;A_e,B_e, f_e\}=\begin{cases} \{\Omega\backslash\bar{D};
A_c,B_c,f_c\}\ &\mbox{in
\ $\Omega\backslash\bar{D}$}, \\
\{D; A_a,B_a,f_a\}\ &\mbox{in $\ D$},
\end{cases}
\]
with $\{D; A_a,B_a,f_a\}$ arbitrary but regular.
\end{defn}

In Definition~\ref{def:cloaking device}, $\{\Omega; A, B\}$ is
regular means that $A$ and $B$ satisfy the algebraic conditions
(\ref{eq:algebraic}) and (\ref{eq:algebraic2}). According to
Definition~\ref{def:cloaking device}, we note that the cloaking
device $\{\Omega\backslash\bar{D}; A_c,B_c,f_c\}$ makes the interior
target object $\{D; A_a,B_a,f_a\}$ indistinguishable from the
reference/background space $\{\Omega; A_b,B_b,f_b\}$ by exterior
detections. In fact, the exterior observer would not even be aware
that something is being hidden.

Next, we present the transformation properties of material
parameters, which are the cruxes of the construction of cloaking
devices via transformation optics approach. In the following, we let
$y=F(x)$ be a diffeomorphism such that $F:\
\Omega\rightarrow\widetilde\Omega$. Then, the push-forwards of
material parameters are given by
\[
F_*\{\Omega; A, B\}=\{\widetilde\Omega; F_*A,
F_*B\}:=\{\widetilde\Omega; \tilde A, \tilde B\},
\]
with
\begin{align}
&\tilde{A}^{pq}(y)={\sum_{\alpha,\beta=1}^2\frac{\partial y_p}{\partial x_\beta}\frac{\partial y_q}
{\partial x_\alpha} A^{\alpha\beta}(x)} J^{-1}\bigg|_{x=F^{-1}(y)},\ \ p,q=1,2,\label{eq:formula 1}\\
&\tilde{B}(y)=B(x)J^{-1}|_{x=F^{-1}(y)},\label{eq:formula 2}
\end{align}
where $J:=\mbox{\textnormal{det}}([\partial y_\alpha/\partial
x_\beta])$, the determinant of the Jacobian of $F$.
We shall make use of the following result
\begin{lem}\label{lem:trans opt}
Let  $F:\ \Omega\rightarrow\widetilde\Omega$ be a diffeomorphism.
For $u, v\in H^1(\Omega)^m$, let $\tilde{u}=(F^{-1})^*u:=u\circ F\in
H^1(\widetilde\Omega)^m$ and $\tilde{v}=(F^{-1})^*v\in
H^1(\widetilde{\Omega})^m$. Then we have
\begin{equation}
\mathcal{Q}_{[A,B]}(u,v)=\mathcal{Q}_{[F_*A, F_*B]}(\tilde u, \tilde
v).
\end{equation}
\end{lem}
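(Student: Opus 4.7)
The plan is to prove the identity by a direct change of variables $y = F(x)$ in the sesquilinear form (2.12). Because $F$ is a diffeomorphism between bounded domains, the Jacobian $J$, its inverse, and the entries of $DF$ and $DF^{-1}$ are bounded on the respective closures, so $F_*A, F_*B \in L^\infty(\widetilde\Omega)^{m\times m}$ and both sesquilinear forms are continuous on their respective $H^1$ spaces. By density of $C^\infty(\overline{\Omega})^m$ in $H^1(\Omega)^m$, it suffices to establish the identity for smooth $u,v$ and then pass to the limit.

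First I would handle the principal (second-order) term. Applying the chain rule $\partial_{x_\beta} u = \sum_{p=1}^2(\partial y_p/\partial x_\beta)\,\partial_{y_p}\tilde u$ and likewise for $v$, substituting into $\sum_{\alpha,\beta}(A^{\alpha\beta}\partial_\beta u)^*\partial_\alpha v$, and collecting the fourfold sum over $(\alpha,\beta,p,q)$ yields, in the $(p,q)$-slot of the resulting expression, an inner bracket
\[
\sum_{\alpha,\beta}\frac{\partial y_p}{\partial x_\beta}\frac{\partial y_q}{\partial x_\alpha}\,(A^{\alpha\beta})^*,
\]
which by the push-forward formula (2.8) is exactly $J\,(\tilde A^{pq})^*$. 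Converting the integration to $\widetilde\Omega$ via $dx = J^{-1}\,dy$ absorbs the factor $J$, and the hermitian symmetry (2.3), which is preserved by the push-forward, allows one to rewrite the resulting integrand so that the matrix blocks align precisely with those appearing in the explicit expression of $\mathcal{Q}_{[\tilde A,\tilde B]}$.

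The zeroth-order term is immediate: $(Bu)^*v\,dx$ transforms to $((B\circ F^{-1})\tilde u)^*\tilde v\,J^{-1}\,dy = (\tilde B\tilde u)^*\tilde v\,dy$ by (2.9). Summing the two contributions produces $\mathcal{Q}_{[F_*A,F_*B]}(\tilde u,\tilde v)$, completing the argument.

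I do not anticipate a conceptual obstacle; the content of the lemma is precisely that the formulas (2.8)--(2.9) were tailored so that all Jacobian factors from the substitution cancel. The only care required lies in the index bookkeeping: (i) tracking how the conjugate transpose on $A^{\alpha\beta}$ interacts with the real-valued Jacobian entries during the chain-rule expansion, and (ii) invoking the pushed-forward symmetry (2.3) so that the block indices on the surviving integrand over $\widetilde\Omega$ match those prescribed by the definition of $\mathcal{Q}_{[\tilde A,\tilde B]}$.
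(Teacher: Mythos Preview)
Your proposal is correct and follows essentially the same route as the paper: a direct change of variables in the integral defining $\mathcal{Q}_{[A,B]}$, using the chain rule on the gradient terms so that the Jacobian factors coming from (2.8)--(2.9) cancel against $dx = J^{-1}\,dy$. Your version is in fact slightly more careful than the paper's one-line computation, since you explicitly flag the density reduction to smooth functions and the role of the symmetry (2.3) in aligning the block indices---points the paper leaves implicit.
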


\begin{proof}
It is verified directly by change of variables in integrations as
follows
\begin{align*}
\mathcal{Q}_{[A,B]}(u,v)=&\int_{\Omega}\left(\sum_{\alpha,\beta=1}^2[A^{\alpha\beta}\frac{\partial u}
{\partial x_\beta}]^*\frac{\partial v}{\partial x_\alpha}-(Bu)^*v\right) dx\\
=&\int_{\widetilde\Omega}\sum_{p,q=1}^2\bigg[\left(\sum_{\alpha,\beta=1}^2\frac{\partial
y_p}{\partial x_\beta}\frac{\partial y_q}{\partial
x_\alpha}A^{\alpha\beta}/J\right)\frac{\partial \tilde u}{\partial
y_p}\bigg]^*\frac{\partial\tilde v}{\partial y_q}-(\frac B J\tilde
u)^*\tilde v\ dy\\
=& \mathcal{Q}_{[\tilde A, \tilde B]}(\tilde u, \tilde v).
\end{align*}

The proof is completed.

\end{proof}

In the rest of this section, based on the above transformation
properties, we give the construction of the two-dimensional cloaking
devices, which we shall investigate in subsequent sections.
We start our study by considering the cloaking of the unit central
disc. In Section 4, we shall indicate how to extend our study to the
general case. In the following, we denote by $\mathbf{B}_\rho$ the
central disc of radius $\rho$ and $\mathbf{S}_\rho:=\partial
\mathbf{B}_\rho$. Let $\{\mathbf{B}_2; A_b, B_b, f_b\}$ be the
(regular) background/reference space, where $f_b$ is supported away
from $\{0\}$. Here, we always assume that there is no eigenvalue
problem for (\ref{eq:var form}) in the reference space, and hence
there is a well-defined DtN operator on $\mathbf{S}_2$, namely
$\Lambda_{A_b, B_b, f_b}^\omega$. Consider the transformation $F$,
defined by
\begin{equation} \label{eqn:F} F: \left\{
\begin{array}{rcl}
\mathbf{B}_2\backslash \{0\} & \rightarrow & \mathbf{B}_2\backslash
\bar{\mathbf{B}}_1
\\
x & \mapsto & (1+\frac{|x|}{2})\frac{x}{|x|}
\end{array}
\right .
\end{equation}
$F$ blows up the origin in the reference space to $\mathbf{B}_1$
while maps $\mathbf{B}_2\backslash\{0\}$ to
$\mathbf{B}_2\backslash\bar{\mathbf{B}}_1$ and keeps $\mathbf{S}_2$
fixed. We note that the blow-up transformation (\ref{eqn:F}) has
been extensively investigated for the design of three-dimensional
cloaking devices in the literature (see \cite{GKLU5}). Under the
transformation $F$, the ambient reference medium in
$\mathbf{B}_2\backslash\{0\}$ is then push-forwarded to form
transformation medium in $\mathbf{B}_2\backslash\bar{\mathbf{B}}_1$
as follows
\begin{equation}\label{eq:cloakmedium}
A_c(y)=F_*A_b(x)\ \ \mbox{and}\ \ B_c(y)=F_*B_b(x),\quad y\in
\mathbf{B}_2\backslash\bar{\mathbf{B}}_1.
\end{equation}
In Section 3, we shall show that
\begin{thm}\label{thm:1}
$\{\mathbf{B}_2\backslash\bar{\mathbf{B}}_1; A_c, B_c, f_c\}$ with
$f_c=J^{-1}(F^{-1})^*f_b$ and $A_c$ and $B_c$ given by
(\ref{eq:cloakmedium}) is an invisibility cloaking device for the
region $\mathbf{B}_1$ with respect to the reference space
$\{\mathbf{B}_2; A_b, B_b,$ $ f_b\}$. That is, for any extended
object
\[
\{\mathbf{B}_2;A_e,B_e, f_e\}=\begin{cases}
\{\mathbf{B}_2\backslash\bar{\mathbf{B}}_1; A_c,B_c,f_c\}\ &\mbox{in
\ $\mathbf{B}_2\backslash\bar{\mathbf{B}}_1$}, \\
\{\mathbf{B}_1; A_a,B_a,f_a\}\ &\mbox{in $\ \mathbf{B}_1$},
\end{cases}
\]
where $\{\mathbf{B}_1; A_a, B_a\}$ is an arbitrary regular medium
and $f_a\in {\widetilde H}^{-1}(\mathbf{B}_1)^m$, we have
\begin{equation}\label{eq:equalcloaking}
\Lambda_{A_e,B_e,f_e}^\omega=\Lambda_{A_b,B_b,f_b}^\omega.
\end{equation}
\end{thm}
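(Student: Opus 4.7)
The plan is to prove \eqref{eq:equalcloaking} by a decoupling-plus-pullback argument. Given a boundary datum $h\in H^{1/2}(\mathbf{S}_2)^m$, I will let $u_e$ be a finite-energy weak solution of $\mathcal{P}_{[A_e,B_e]}u_e=f_e$ on $\mathbf{B}_2$ with $u_e|_{\mathbf{S}_2}=h$, taken in the appropriate singularly weighted Sobolev space on $\mathbf{B}_2$ (to be introduced in Section~3), and split $u_e=u_e^+\cup u_e^-$ where $u_e^+$ lives on the cloaking layer $\mathbf{B}_2\setminus\bar{\mathbf{B}}_1$ and $u_e^-$ on the cloaked disc $\mathbf{B}_1$. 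I will then pull $u_e^+$ back by $F^{-1}$ to $\tilde u^+:=u_e^+\circ F$ on $\mathbf{B}_2\setminus\{0\}$. By Lemma~\ref{lem:trans opt} and the definitions $A_c=F_*A_b$, $B_c=F_*B_b$, $f_c=J^{-1}(F^{-1})^*f_b$, the function $\tilde u^+$ is a weak solution of the background equation $\mathcal{P}_{[A_b,B_b]}\tilde u^+=f_b$ on the punctured disc, and because $F|_{\mathbf{S}_2}=\mathrm{id}$ it still carries the boundary value $h$ on $\mathbf{S}_2$.

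The core step is then a removable-singularity result: any finite-energy solution on $\mathbf{B}_2\setminus\{0\}$ (in the weighted sense obtained by pullback) extends to a genuine $H^1(\mathbf{B}_2)^m$ solution of the background problem on all of $\mathbf{B}_2$. Granted this extension, say $\tilde u$, the standard well-posedness of the background problem (guaranteed by coercivity \eqref{eq:coercivity} and the assumption of no interior eigenvalue) identifies $\tilde u$ with the unique background solution having Dirichlet datum $h$. Finally, invoking Lemma~\ref{lem:trans opt} once more on a thin collar of $\mathbf{S}_2$ and using that $F$ is the identity on $\mathbf{S}_2$ shows that the conormal derivative $\sum\nu_\alpha A_c^{\alpha\beta}\gamma(\partial_\beta u_e^+)$ equals $\sum\nu_\alpha A_b^{\alpha\beta}\gamma(\partial_\beta \tilde u)$ on $\mathbf{S}_2$, which is exactly $\Lambda^\omega_{A_b,B_b,f_b}(h)$; this yields \eqref{eq:equalcloaking}.

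The main obstacle, and the reason this theorem is genuinely harder than its three-dimensional analogue in \cite{GKLU,HetmaniukLiu}, is precisely the removable-singularity analysis on the punctured disc. In two dimensions the cloak $A_c$ is simultaneously degenerate and blow-up singular at $\mathbf{S}_1$, so the pullback weight at $0$ is more violent than what the 3D Riemannian-geometric argument of \cite{GKLU} can handle. I therefore expect to devote the bulk of Section~3 to (i) defining the singularly weighted Sobolev spaces on the annulus and their pullbacks on the punctured disc so that Lemma~\ref{lem:trans opt} lifts to an isometry, (ii) establishing a density theorem together with a Hardy-type inequality showing that $\{0\}$ has zero capacity in these spaces, and (iii) combining these with coercivity to deduce that the pullback $\tilde u^+$ is the restriction of a bona fide $H^1(\mathbf{B}_2)^m$ function.

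A secondary ingredient is the treatment of $u_e^-$ on the cloaked side. Since the coupling across $\mathbf{S}_1$ is through a singular medium, I expect the weighted variational formulation to force a \emph{hidden} boundary condition on $\mathbf{S}_1$ that $u_e^-$ must satisfy in addition to the Dirichlet data inherited from $u_e^+$. I will need to verify that this condition is always compatible with an arbitrary regular $\{\mathbf{B}_1;A_a,B_a,f_a\}$ and $f_a\in\widetilde{H}^{-1}(\mathbf{B}_1)^m$, so that the interior problem is well-posed without transmitting any information to $\mathbf{S}_2$. Combined with the extension result, this decoupling yields the identity of DtN maps for all admissible $h$ and thus the theorem.
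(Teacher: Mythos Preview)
Your proposal is correct and follows essentially the same route as the paper: the decoupling and removable-singularity content you outline is exactly what the paper packages as Theorem~\ref{thm:2} (with the weighted spaces and the zero-capacity of $\{0\}$ handled via Lemmata~\ref{lem:1} and~\ref{lem:characterization}), and the identification of conormal derivatives on $\mathbf{S}_2$ via Lemma~\ref{lem:trans opt} together with $F|_{\mathbf{S}_2}=\mathrm{id}$ is precisely how the paper finishes (using a radial cutoff $\psi=(1-\kappa(|y|))\phi$ and Green's identity). One small correction to your last paragraph: the interior problem with the hidden boundary condition (constant Dirichlet trace on $\mathbf{S}_1$ plus vanishing mean conormal flux) is \emph{not} automatically well-posed for arbitrary $f_a$; the paper shows it obeys a Fredholm alternative (Theorem~\ref{thm:wellposedness}) and imposes a compatibility condition on $f_a$, but this has no bearing on \eqref{eq:equalcloaking} since the outer pullback already coincides with the unique background solution independently of the interior.
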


In the sequel, we conveniently define the push-forward of the source
term as
\[
F_*f=\mbox{\textnormal{det}}(DF)(F^{-1})^* f.
\]
So, the cloaking device in Theorem~\ref{thm:1} is given by
\[
\{\mathbf{B}_2\backslash\bar{\mathbf{B}}_1; A_c, B_c,
f_c\}=F_*\{\mathbf{B}_2\backslash\{0\}; A_b, B_b, f_b\}.
\]
Next, we derive the explicit expressions of material parameters of
the cloaking medium in Theorem~\ref{thm:1}. By (\ref{eq:formula
1})-(\ref{eq:formula 2}) and straightforward calculations, we have
for $y\in \mathbf{B}_2\backslash\bar{\mathbf{B}}_1$
\begin{align}
A_c(y)=&\frac{|y|-1}{|y|}\Pi(y)\otimes I_m A_b(F^{-1}(y))\Pi(y)\otimes I_m\label{eq:cloak1}\\
 &+\Pi(y)\otimes A_b(F^{-1}(y))(I-\Pi(y))\otimes I_m\nonumber\\
 &+\frac{|y|}{|y|-1}(I-\Pi(y))A_b(F^{-1}(y))(I-\Pi(y))\nonumber\\
 &+(I-\Pi(y))\otimes I_m A_b(F^{-1}(y))\Pi(y)\otimes I_m,\nonumber\\
B_c(y)=&\frac{4(|y|-1)}{|y|}B_b(F^{-1}(y)),\label{eq:cloak2}
\end{align}
where $I_m$ is the $m\times m$ identity matrix and $\Pi(y):
\mathbb{R}^2\rightarrow\mathbb{R}^2$ is the projection to the radial
direction, defined by
\[
\Pi(y)\xi=\bigg(\xi\cdot\frac{y}{|y|}\bigg)\frac{y}{|y|},
\]
i.e. $\Pi(y)$ is represented by the symmetric matrix $|y|^{-2}yy^T$.
For $y\in\mathbf{B}_2\backslash\bar{\mathbf{B}}_1$, we let
$\hat{y}:=y/|y|=[\hat{y}_1,\hat{y}_2]^T\in\mathbf{S}_1$ and
$\hat{y}^{\perp}=[\hat{y}^{\perp}_1,
\hat{y}^{\perp}_2]^T\in\mathbf{S}_1$ be such that
$\hat{y}\cdot\hat{y}^{\perp}=0$. Let $\mathbf{1}_m\in\mathbb{R}^m$
with each entry being 1, and set
$\xi_l=\hat{y}_l\mathbf{1}_m\in\mathbb{R}^m$ and
$\xi_l^\perp=\hat{y}_l^\perp\mathbf{1}_m\in\mathbb{R}^m$ for
$l=1,2$. Then, by straightforward calculations, together with the
following facts
\[
\Pi(y)\hat{y}=\hat{y}\quad(I-\Pi(y))\hat{y}=0,\quad\Pi(y)\hat{y}^\perp=0,\quad
(I-\Pi(y))\hat{y}^\perp=\hat{y}^\perp,
\]
we have
\begin{align*}
&\sum_{p,q=1}^2[A_c^{pq}(y)\xi_q]^*\xi_p=\frac{|y|-1}{|y|}\sum_{p,q=1}^2[A_b^{pq}(F^{-1}(y))\xi_q]^*\xi_p,\\
&\sum_{p,q=1}^2[A_c^{pq}(y)\xi_q^\perp]^*\xi_p^\perp=\frac{|y|}{|y|-1}\sum_{p,q=1}^2[A_b^{pq}(F^{-1}(y))\xi_q^\perp]^*\xi_p^\perp.
\end{align*}
Since $A_b$ is a regular medium parameter, we have by
(\ref{eq:algebraic}) that as $|y|\rightarrow 1^+$
\begin{align*}
\sum_{p,q=1}^2[A_c^{pq}(y)\xi_q]^*\xi_p\rightarrow
0\quad\mbox{and}\quad
\sum_{p,q=1}^2[A_c^{pq}(y)\xi_q^\perp]^*\xi_p^\perp\rightarrow\infty.
\end{align*}
Meanwhile, it is obvious that $B_c$ does not satisfy
(\ref{eq:algebraic2}). That is, the algebraic conditions
(\ref{eq:algebraic}) and (\ref{eq:algebraic2}) for a regular medium
are violated by the cloaking medium
$\{\mathbf{B}_2\backslash\bar{\mathbf{B}}_1; A_c, B_c\}$, which
exhibits both degeneracy and blow-up singularities as one approaches
the cloaking interface $\mathbf{S}_1^+$, i.e. from
$\mathbf{B}_2\backslash\bar{\mathbf{B}}_1$. We remark that this is
in sharp difference from the study in \cite{GKLU} for
three-dimensional cloaking devices where one would encounter only
degeneracy singularities.

\section{Finite energy solutions for singular PDEs}

Consider the differential equation underlying the cloaking problem
in Theorem~\ref{thm:1},
\begin{equation}\label{eq:cloaking Helmholtz}
\mathcal{P}_{[A_e,B_e]} u=f_e\quad\mbox{on\ $\mathbf{B}_2$},\quad
u|_{\mathbf{S}_2}=h.
\end{equation} As we have seen in the end of Section 2, the
cloaking medium $\{\mathbf{B}_2\backslash\bar{\mathbf{B}}_1;
A_c,B_c\}$ is singular. One has to be careful in defining the
meaning of a solution to the singular PDEs system~(\ref{eq:cloaking
Helmholtz}) (see also Remark~\ref{rem:necessity} for the necessity
of introducing a suitable class of weak solutions to
(\ref{eq:cloaking Helmholtz}) other than spatial $H^1$-solutions).
To that end, we define for $\phi(y)\in \mathscr{E}(\mathbf{B}_2)^m$,
\[
\mathcal{E}_{[A,B]}(\phi)=\bigg|\int_{\Omega}\left(\sum_{\alpha,\beta=1}^2(A^{\alpha\beta}\partial_\beta\phi)^*\partial_\alpha\phi
+(B\phi)^*\phi\right)\ dy\bigg|^{1/2}.
\]
We note that if $\{\mathbf{B}_2;A,B\}$ is a regular medium, then it
is straightforward to verify
\begin{equation}\label{eq:equivalent}
\mathcal{E}_{[A,B]}(\phi)\sim \|\phi\|_{H^1(\mathbf{B}_2)^m}.
\end{equation}
Here and in the sequel, for two relations $\mathrm{R}_1,
\mathrm{R}_2$, $\mathrm{R}_1\sim\mathrm{R}_2$ means that there
exists two finite positive constants $c_1, c_2$ such that
$c_1\mathrm{R}_1\leq \mathrm{R}_2\leq c_2\mathrm{R}_1$. Also, in the
following, for notational convenience, we shall frequently refer to
$\mathrm{R}_1\lesssim \mathrm{R}_2$ as $\mathrm{R}_1\leq
c_2\mathrm{R}_2$. Next, one can verify directly that due to the
blow-up singularity of $A_e$ on $\mathbf{S}_1^+$,
\begin{equation}\label{eq:tangential condition}
\phi\in \mathscr{E}(\mathbf{B}_2)^m\ \ \mbox{and}\ \
\mathcal{E}_{[A_e,B_e]}(\phi)<\infty\ \ \mbox{if{f}}\ \ \
\frac{\partial\phi}{\partial\theta}\bigg|_{\mathbf{S}_1}=0.
\end{equation}
Here and also in what follows, we make use of the standard polar
coordinate $(y_1,y_2)\mapsto (r\cos\theta, r\sin\theta)$ in
$\mathbb{R}^2$. Hence, we set
\begin{equation}
\mathscr{T}^\infty(\mathbf{B}_2)^m:=\{\phi\in
\mathscr{E}(\mathbf{B}_2)^m;\
\frac{\partial\phi}{\partial\theta}\bigg|_{\mathbf{S}_1}=0\},
\end{equation}
and
\begin{equation}
\mathscr{T}_0^\infty(\mathbf{B}_2)^m:=\mathscr{T}^\infty(\mathbf{B}_2)^m\cap
\mathscr{D}(\mathbf{B}_2)^m,
\end{equation}
which are closed subspaces of $\mathscr{E}(\mathbf{B}_2)^m$. Set
\[
W_e(y):=\frac{4(|y|-1)}{|y|}I_m\quad\mbox{for $1<|y|<2$};\ \
I_m\quad \mbox{for $|y|\leq 1$}.
\]
We note by using the fact $B_b$ is a regular material parameter,
together with (\ref{eq:cloak2}) and (\ref{eq:algebraic2}), that for
$\phi\in \mathscr{T}^\infty(\mathbf{B}_2)$
\begin{equation}\label{eq:equivalent2}
\mathcal{E}_{[A_e, W_e]}(\phi)\sim\mathcal{E}_{[A_e, B_e]}(\phi).
\end{equation}
On the other hand, it is easy to see that
$\mathcal{E}_{[A_e,W_e]}(\cdot)$ defines a norm on
$\mathscr{T}^\infty(\mathbf{B}_2)^m$. Let
\begin{equation}\label{eq:weighted space}
H_{[A_e,B_e]}^1(\mathbf{B}_2)^m:=\mbox{cl}\{\mathscr{T}^\infty(\mathbf{B}_2)^m;
\mathcal{E}_{[A_e,W_e]}(\cdot)\},
\end{equation}
that is, the closure of the linear function space
$\mathscr{T}^\infty(\mathbf{B}_2)^m$ with respect to the singularly
weighted Sobolev norm $\mathcal{E}_{[A_e,W_e]}(\cdot)$. Clearly, one
can consider the elements in $H_{[A_e,B_e]}^1(\mathbf{B}_2)^m$ as
$\mathbb{C}^m$-valued measurable functions. Moreover, we have

\begin{lem}\label{lem:measurable}
The map
\[
\phi\mapsto D_{A_e}^\alpha\phi:=\sum_{\beta=1,2}
A_e^{\alpha\beta}\partial_\beta\phi,\
\phi\in\mathscr{T}^\infty(\mathbf{B}_2)^m,\ \alpha=1,2
\]
has a bounded extension
\begin{equation}\label{eq:extension}
D_{A_e}^\alpha:\
H^1_{[A_e,B_e]}(\mathbf{B}_2)^m\mapsto\mathcal{M}(\mathbf{B}_2;\mathbb{C}^m),
\end{equation}
where $\mathcal{M}(\mathbf{B}_2;\mathbb{C}^m)$ represents the space
of complex $\mathbb{C}^m$-valued Borel measures on $\mathbf{B}_2$.
Moreover, for $u\in H^1_{[A_e,B_e]}(\mathbf{B}_2)^m$, we have in the
sense of Borel measures
\begin{equation}\label{eq:borel}
(D_{A_e}^\alpha u)(\mathbf{S}_1)=0,\ \ \alpha=1,2.
\end{equation}

\end{lem}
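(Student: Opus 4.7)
The plan is to transform the problem to the reference space through the blow-up map $F$, where the singularities of $A_e$ originate from the regular $A_b$. Setting $\tilde\phi:=\phi\circ F$ on $\mathbf{B}_2\setminus\{0\}$, the condition $\partial\phi/\partial\theta|_{\mathbf{S}_1}=0$ defining $\mathscr{T}^\infty$ forces $\phi|_{\mathbf{S}_1}$ to be constant in $\theta$, so $\tilde\phi$ extends continuously to the origin. Combining Lemma~\ref{lem:trans opt} with (\ref{eq:equivalent})--(\ref{eq:equivalent2}) then yields
\[
\|\tilde\phi\|_{H^1(\mathbf{B}_2)^m}\sim\mathcal{E}_{[A_b,B_b]}(\tilde\phi)=\mathcal{E}_{[A_e,B_e]}(\phi)\sim\mathcal{E}_{[A_e,W_e]}(\phi),
\]
so the pullback identifies $\mathscr{T}^\infty(\mathbf{B}_2)^m$ (with the $\mathcal{E}_{[A_e,W_e]}$-norm) as a dense subspace of $H^1(\mathbf{B}_2)^m$. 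The chain rule gives the key identity
\[
D_{A_e}^\alpha\phi(y)=J(x)^{-1}\sum_{p}\frac{\partial y_\alpha}{\partial x_p}(x)\,D_{A_b}^p\tilde\phi(x),\qquad x=F^{-1}(y),
\]
and a direct computation using $F(x)=(1+|x|/2)\,x/|x|$ shows that $J^{-1}DF$ equals the bounded matrix $\frac{|x|}{1+|x|/2}\Pi(x)+2(I-\Pi(x))$: the tangential $1/|x|$ blow-up of $DF$ is exactly cancelled by the $1/|x|$ singularity of $J=\det(DF)$.

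With these in hand I would define, for Borel $E\subset\mathbf{B}_2$, the candidate measure
\[
(D_{A_e}^\alpha\phi)(E):=\int_{E\cap\mathbf{B}_1}D_{A_a}^\alpha\phi\,dy+\int_{F^{-1}(E\setminus\bar{\mathbf{B}}_1)}\sum_{p}\partial_{x_p}y_\alpha\,D_{A_b}^p\tilde\phi(x)\,dx,
\]
which is absolutely continuous with respect to Lebesgue measure on $\mathbf{B}_2$. The cloaked-region contribution is controlled by $\lesssim\|\phi\|_{H^1(\mathbf{B}_1)^m}\lesssim\mathcal{E}_{[A_e,W_e]}(\phi)$ since $A_a$ is regular. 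For the cloaking layer, I would decompose the integrand in the radial--tangential basis of Section~\ref{sect:2}: the radial piece $\tfrac12\hat x_\alpha(\hat x\cdot D_{A_b}\tilde\phi)$ is dominated pointwise by $\tfrac12|D_{A_b}\tilde\phi|\in L^2(\mathbf{B}_2)\subset L^1(\mathbf{B}_2)$, while the tangential piece $\frac{1+|x|/2}{|x|}\hat x^\perp_\alpha(\hat x^\perp\cdot D_{A_b}\tilde\phi)$ needs more care and its total-variation bound by $C\mathcal{E}_{[A_e,W_e]}(\phi)$ is obtained by pairing with test functions, integrating by parts in the angular variable (using periodicity), and exploiting the continuity of $\tilde\phi$ at $x=0$ secured in Step~1. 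Density of $\mathscr{T}^\infty(\mathbf{B}_2)^m$ in $H^1_{[A_e,B_e]}(\mathbf{B}_2)^m$ then extends $D_{A_e}^\alpha$ as a bounded linear map into $\mathcal{M}(\mathbf{B}_2;\mathbb{C}^m)$, giving (\ref{eq:extension}). The property (\ref{eq:borel}) is immediate from this construction: the image measure is absolutely continuous with respect to Lebesgue measure on $\mathbf{B}_2$ and $|\mathbf{S}_1|=0$; equivalently, $F^{-1}(\mathbf{S}_1)=\{0\}$ has Lebesgue measure zero in $x$-space, so no mass concentrates on $\mathbf{S}_1$ under push-forward.

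The hard part is the tangential-component $L^1$-estimate in the cloaking layer. The factor $(1+|x|/2)/|x|$ entering $\partial_p y_\alpha$ is not square-integrable on $\mathbf{B}_2$ in two dimensions, so a na\"ive Cauchy--Schwarz against $\|\nabla\tilde\phi\|_{L^2}$ is insufficient. The proof has to recover the extra factor of $|x|$ needed for integrability from the regularity of $\tilde\phi$ at the origin, leveraging precisely the angular-mean continuity enforced by the tangential constraint defining $\mathscr{T}^\infty$. This is the genuinely two-dimensional difficulty absent from the three-dimensional analyses of \cite{GKLU,HetmaniukLiu}, and it is what makes the singularly weighted space $H^1_{[A_e,B_e]}(\mathbf{B}_2)^m$, rather than a plainly weighted Sobolev space, the correct functional setting.
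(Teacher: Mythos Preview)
Your strategy mirrors the paper's almost exactly: both split $\mathbf{B}_2$ into the cloaked disc and the cloaking annulus, handle the disc via regularity of $A_a$, and pull the annulus back through $F$ to the regular problem on $\mathbf{B}_2\setminus\{0\}$, arriving at the same expression $\sum_p(\partial_{x_p}y_\alpha)\,D_{A_b}^p\tilde\phi$ with the $\mathcal{O}(1/|x|)$ Jacobian entries. The paper then pairs with a continuous test function $g$, applies Cauchy--Schwarz to produce a factor $\|\tilde g/r\|_{L^2(\mathbf{B}_2)}$, and records a bound carrying an extra factor $\mbox{dist}(\mbox{supp}(g),\mathbf{S}_1)$; the vanishing on $\mathbf{S}_1$ then follows by shrinking the support of $g$ toward $\mathbf{S}_1$. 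So where you attempt a direct $L^1$ (total-variation) bound on the density, the paper works by duality against $C(\mathbf{B}_2)^m$ from the outset---but the underlying analytic content is the same.

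The gap you flag in your last paragraph is exactly the point the paper's argument passes over most quickly: in two dimensions the tangential factor $\partial_p y_\alpha\sim 1/|x|$ is not in $L^2(\mathbf{B}_2)$, so neither your direct Cauchy--Schwarz on $\int |x|^{-1}|\nabla\tilde\phi|$ nor the paper's $\|\tilde g/r\|_{L^2}$ term is finite without further input (for a generic $g\in C(\mathbf{B}_2)^m$, $\tilde g=g\circ F$ is not constant at the origin and $\tilde g/r\notin L^2$). Your proposed repair---integration by parts in $\theta$---does not obviously close the gap either, since the total-variation norm requires testing against all $g$ with $\|g\|_\infty\le 1$, and moving $\partial_\theta$ onto such $g$ gains nothing. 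Your side observation that $J^{-1}DF$ is bounded is correct but, as you implicitly notice, the Jacobian cancels under change of variables in the measure, so it does not help here. In short: your outline is faithful to the paper's own route, you isolate the delicate tangential step more explicitly than the paper does, but the sketch you give for that step is not yet a proof.
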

\begin{proof}
Let $\phi\in \mathscr{T}^\infty(\mathbf{B}_2)^m$, $g\in
C(\mathbf{B}_2)^m$ and $\tilde\phi=F^*\phi, \tilde g=F^* g\in
L^\infty(B_2)^m$. Then, it is straightforward to show that
$D_{A_e}^\alpha\phi\in L^\infty(\mathbf{B}_2)^m$. Hence, it has that
\begin{align*}
&\int_{\mathbf{B}_2}(D_{A_e}^\alpha\phi)^* g\
dy=\int_{\mathbf{B}_2\backslash\mathbf{S}_1} (D_{A_e}^\alpha\phi)^*
g\ dy\\
=& \int_{\mathbf{B}_2\backslash\{0\}}\left(\sum_{k,l\leq
1}\frac{\partial y_\alpha}{\partial
x_k}\frac{\partial\tilde{\phi}}{\partial x_l}
A_b^{kl}\right)^*\tilde g\
dx+\int_{\mathbf{B}_1}(D_{A_a}^\alpha\phi)^* g\ dx
\end{align*}
On $\mathbf{B}_2$, one has $\partial y_\alpha/\partial
x_k=\mathcal{O}(\frac 1 r)$. This together with the facts that
$\{\mathbf{B}_2; A_b, B_b\}$ and $\{\mathbf{B}_1; A_a, B_a\}$ are
regular, we use (\ref{eq:equivalent}) to further have
\begin{align}
&|\int_{\mathbf{B}_2}(D_{A_e}^\alpha\phi)^* g\ dy|\nonumber\\
\lesssim &\ \|\tilde\phi\|_{H^1(\mathbf{B}_2)^m}\|\tilde
g/r\|_{L^2(\mathbf{B}_2)^m}+\|\phi\|_{H^1(\mathbf{B}_2)^m}\|g\|_{L^2(\mathbf{B}_1)^m}\nonumber\\
\lesssim &\
[\mathcal{E}_{[A_b,B_b]}(\tilde\phi)+\mathcal{E}_{[A_a,B_a]}(\phi)]\|g\|_{C(\mathbf{B}_2)^m}\mbox{dist}(\mbox{supp}(g), \mathbf{S}_1)\label{eq:01}\\
\lesssim &\ \mathcal{E}_{[A_e,
B_e]}(\phi)\|g\|_{C(\mathbf{B}_2)^m}\mbox{dist}(\mbox{supp}(g),
\mathbf{S}_1)\label{eq:02}\\
\lesssim &\ \mathcal{E}_{[A_e,
W_e]}(\phi)\|g\|_{C(\mathbf{B}_2)^m}\mbox{dist}(\mbox{supp}(g),
\mathbf{S}_1)\label{eq:03}\\
\lesssim &\
\|\phi\|_{H^1_{[A_e,B_e]}(\mathbf{B}_2)^m}\|g\|_{C(\mathbf{B}_2)^m}\mbox{dist}(\mbox{supp}(g),
\mathbf{S}_1).\nonumber
\end{align}
In the above inequalities, from (\ref{eq:01}) to (\ref{eq:02}), we
have made use the following facts by using Lemma~\ref{lem:trans opt}
\begin{equation}
\mathcal{E}_{[A_b,B_b]}(\tilde\phi)=\mathcal{Q}_{[A_b,-B_b]}(\tilde\phi,\tilde\phi)=\mathcal{Q}_{[F_*A_b,-F_*B_b]}(\phi,\phi)
=\mathcal{E}_{[A_e,B_e]}(\phi),
\end{equation}
whereas from (\ref{eq:02}) to (\ref{eq:03}), we have made use the
equivalence (\ref{eq:equivalent2}). This proves the bounded
extension (\ref{eq:extension}). Finally, (\ref{eq:borel}) follows by
taking functions $g$ supported in sufficiently small neighborhoods
of $\mathbf{S}_1$.

The proof is completed.
\end{proof}

Now, the solution to the singular system of PDEs (\ref{eq:cloaking
Helmholtz}) is defined by the distributional duality as to find
$u\in H^1_{[A_e,B_e]}(\mathbf{B}_2)^m$ such that
$u|_{\mathbf{S}_2}=h\in H^{1/2}(\mathbf{S}_2)^m$ and
\begin{equation}\label{eq:distributional equation}
\mathcal{Q}_{[A_e,B_e]}(u,\phi)=\langle f_e,\phi \rangle,\ \ \forall
\phi\in\mathscr{T}_0^\infty(\mathbf{B}_2)^m.
\end{equation}
\begin{rem}
Since the singularities of $A_e$ and $B_e$ are only attached to
$\mathbf{S}_1^+$, we know that for any $u\in
H_{[A_e,B_e]}^1(\mathbf{B}_2)^m$, $u\in
H^1_{loc}(\mathbf{B}_2\backslash \mathbf{S}_1)^m$. Therefore, for
(\ref{eq:distributional equation}) we have the well-defined
$u|_{\mathbf{S}_2}=h\in H^{1/2}(\mathbf{S}_2)^m$ and also a
well-defined Dirichlet-to-Neumann map on $\mathbf{S}_2$ defined by
\begin{equation}
\Lambda_{A_e,B_e, f_e}^\omega(h)=\sum_{\alpha,\beta\leq 1}\nu_\alpha
A_e^{\alpha\beta}\partial_\beta u\in H^{-1/2}(\mathbf{S}_2)^m,
\end{equation}
provided (\ref{eq:distributional equation}) has a unique solution.
\end{rem}


\begin{rem}
As is known,
\[
\bigg|\int_{\mathbf{B}_2}\left(\sum_{\alpha,\beta=1}^2(A_e^{\alpha\beta}\partial_\beta
u)^*\partial_\alpha u +(B_eu)^*u\right)\ dy\bigg|
\]
is the (generalized) energy of the system. Hence, the solution in
(\ref{eq:distributional equation}) is physically meaningful in that
it has finite energy.
\end{rem}

Next, we shall show the following on the solution of
(\ref{eq:distributional equation}).

\begin{thm}\label{thm:2} $u\in
H^1_{[A_e,B_e]}(\mathbf{B}_2)^m$ is a solution to
(\ref{eq:distributional equation}) if and only if $\tilde
v(x)=(F^*(u|_{\mathbf{B}_2\backslash\bar{\mathbf{B}}_1}))^e \in
H^1(\mathbf{B}_2)^m$ with
$(F^*(u|_{\mathbf{B}_2\backslash\bar{\mathbf{B}}_1}))^e$ denoting
the extension of
$F^*(u|_{\mathbf{B}_2\backslash\bar{\mathbf{B}}_1})$ from
$\mathbf{B}_2\backslash\{0\}$ to $\mathbf{B}_2$ (e.g., by setting it
be 0), is a solution to
\begin{equation}\label{eq:thm11}
\mathcal{P}_{[A_b,B_b]}\tilde v=f_b\ \ \mbox{on\ $\mathbf{B}_2$},\ \
\tilde v|_{\mathbf{S}_2}=h
\end{equation}
and $w=u|_{\mathbf{B}_1}\in H^1(\mathbf{B}_1)^m$ is a solution to
\begin{equation}\label{eq:thm12}
\mathcal{P}_{[A_a,B_a]}w=f_a\ \ \mbox{on\ $\mathbf{B}_1$},\ \
w|_{\mathbf{S}_1}=\mathbf{c}_0,
\end{equation}
where $\mathbf{c}_0\in\mathbb{C}^m$ is a constant vector determined
by
\begin{equation}\label{eq:thm13}
\int_{\mathbf{S}_1}\sum_{\alpha,\beta\leq 1}\nu_\alpha
A_a^{\alpha\beta}\partial_\beta w\ dS=0.
\end{equation}
\end{thm}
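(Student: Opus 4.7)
The plan is to decouple the singular problem across the cloaking interface $\mathbf{S}_1$ by restricting the class $\mathscr{T}_0^\infty(\mathbf{B}_2)^m$ of test functions to suitable subclasses, while exploiting the zero $H^1$-capacity of a point in $\mathbb{R}^2$ to transfer the equation freely across the origin in pulled-back coordinates.

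For the forward direction, let $\phi_n\in\mathscr{T}^\infty(\mathbf{B}_2)^m$ approximate $u$ in the norm $\mathcal{E}_{[A_e,W_e]}$. The coefficients are regular on $\mathbf{B}_1$, so (\ref{eq:equivalent}) yields $w=u|_{\mathbf{B}_1}\in H^1(\mathbf{B}_1)^m$. On the shell, the equivalence $\mathcal{E}_{[A_b,B_b]}(F^*\phi_n)\sim\mathcal{E}_{[A_e,B_e]}(\phi_n)$ already exploited inside the proof of Lemma~\ref{lem:measurable} makes the pullbacks $F^*\phi_n$ Cauchy in $H^1(\mathbf{B}_2\setminus\{0\})^m$; since single points have zero $H^1$-capacity in dimension two, the limit extends to some $\tilde v\in H^1(\mathbf{B}_2)^m$. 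Each $\phi_n$ has the constant angular trace $\mathbf{c}_n:=\phi_n|_{\mathbf{S}_1}$ by definition of $\mathscr{T}^\infty$; convergence in $H^1(\mathbf{B}_1)^m$ gives $\mathbf{c}_n\to w|_{\mathbf{S}_1}$, while $F^*\phi_n(0)=\mathbf{c}_n$ combined with the convergence of the pullbacks (via quasi-continuous representatives) gives $\mathbf{c}_n\to\tilde v(0)$, forcing the common limit $\mathbf{c}_0:=w|_{\mathbf{S}_1}=\tilde v(0)$ to be a single constant vector.

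The two PDEs follow by specializing the test functions. For $\phi\in\mathscr{D}(\mathbf{B}_1)^m\subset\mathscr{T}_0^\infty(\mathbf{B}_2)^m$ one directly recovers (\ref{eq:thm12}); for $\phi\in\mathscr{T}_0^\infty$ compactly supported in the open shell, Lemma~\ref{lem:trans opt} with the push-forward identities $F_*\{A_b,B_b,f_b\}=\{A_c,B_c,f_c\}$ yields $\mathcal{P}_{[A_b,B_b]}\tilde v=f_b$ on $\mathbf{B}_2\setminus\{0\}$, which extends to all of $\mathbf{B}_2$ by capacity-zero density of $\mathscr{D}(\mathbf{B}_2\setminus\{0\})^m$ in $\widetilde{H}^1(\mathbf{B}_2)^m$, producing (\ref{eq:thm11}). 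The hidden boundary condition (\ref{eq:thm13}) is then extracted by testing against a $\phi\in\mathscr{T}_0^\infty(\mathbf{B}_2)^m$ that equals an arbitrary constant $\mathbf{c}\in\mathbb{C}^m$ in a neighborhood of $\mathbf{S}_1$: splitting $\mathcal{Q}_{[A_e,B_e]}(u,\phi)$ across $\mathbf{S}_1$, converting the shell piece via Lemma~\ref{lem:trans opt} (with $F^*\phi\in H^1(\mathbf{B}_2)^m$ because $\partial_\theta\phi|_{\mathbf{S}_1}=0$ kills the singular angular component of $DF$), and integrating by parts on each side using (\ref{eq:thm11})--(\ref{eq:thm12}) cancels every volume contribution against $\langle f_e,\phi\rangle$ and leaves exactly $\mathbf{c}^*\int_{\mathbf{S}_1}\sum\nu_\alpha A_a^{\alpha\beta}\partial_\beta w\,dS=0$; arbitrariness of $\mathbf{c}$ yields (\ref{eq:thm13}).

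The converse reverses the same calculation: for any $\phi\in\mathscr{T}_0^\infty(\mathbf{B}_2)^m$ with angular trace $\mathbf{c}_\phi=\phi|_{\mathbf{S}_1}$, Lemma~\ref{lem:trans opt} rewrites $\mathcal{Q}_{\mathrm{shell}}(u,\phi)=\mathcal{Q}_{[A_b,B_b]}(\tilde v,F^*\phi)$ on $\mathbf{B}_2$, which by (\ref{eq:thm11}) equals $\langle f_b,F^*\phi\rangle=\langle f_c,\phi\rangle_{\mathrm{shell}}$ (no boundary contribution at $0$ by zero capacity, none at $\mathbf{S}_2$ by compact support), while $\mathcal{Q}_{\mathbf{B}_1}(w,\phi)=\langle f_a,\phi\rangle_{\mathbf{B}_1}+\mathbf{c}_\phi^*\int_{\mathbf{S}_1}\sum\nu_\alpha A_a^{\alpha\beta}\partial_\beta w\,dS=\langle f_a,\phi\rangle_{\mathbf{B}_1}$ by (\ref{eq:thm12})--(\ref{eq:thm13}); summing recovers (\ref{eq:distributional equation}). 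I expect the principal obstacle to be the rigorous trace identity $\tilde v(0)=\mathbf{c}_0$ in the forward direction, which in two dimensions places a single point exactly on the boundary of $H^1$-regularity and must be handled through quasi-continuous representatives; almost equally delicate is verifying that $F^*\phi\in H^1(\mathbf{B}_2)^m$ for every $\phi\in\mathscr{T}_0^\infty$, where the angular-flatness defining $\mathscr{T}^\infty$ is precisely what compensates for the $|x|^{-1}$-blowup of the Jacobian of $F$ near the origin.
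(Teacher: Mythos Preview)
Your overall strategy---pull the shell equation back through $F$, exploit the zero $H^1$-capacity of the origin in $\mathbb R^2$, and isolate (\ref{eq:thm13}) by testing with functions constant near $\mathbf S_1$---is essentially the paper's, though the paper packages the structural facts about $H^1_{[A_e,B_e]}$ separately as Lemma~\ref{lem:characterization} and carries out the interface analysis via explicit $\varepsilon$-annuli limits rather than your more compressed integration-by-parts. Two points need correction.

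First, the asserted identity $\tilde v(0)=\mathbf c_0=w|_{\mathbf S_1}$ is false in general and your argument for it cannot work: a single point has zero $H^1$-capacity in $\mathbb R^2$, so quasi-continuous representatives carry no information at $\{0\}$, and $H^1$-convergence of $F^*\phi_n$ says nothing about values there. Indeed the paper's Remark~\ref{rem:necessity} observes that the jump $\gamma^+u-\gamma^-u$ across $\mathbf S_1$ is generically a \emph{nonzero} constant, which is precisely the statement $\tilde v(0)\neq\mathbf c_0$ whenever $\tilde v$ is continuous at the origin. This does not damage the remainder of your argument---you never use the equality---but it reflects a misconception: $\mathbf c_0$ is determined solely by the interior problem (\ref{eq:thm12})--(\ref{eq:thm13}) and has no relation to $\tilde v$.

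Second, and more seriously, your converse is incomplete: you verify the weak identity (\ref{eq:distributional equation}) for the assembled $u$ but never show $u\in H^1_{[A_e,B_e]}(\mathbf B_2)^m$. This is not automatic, because the space is defined as a closure of $\mathscr T^\infty(\mathbf B_2)^m$ and you must exhibit an approximating sequence. The paper supplies this through Lemma~\ref{lem:1}---showing that $\chi_{\bar{\mathbf B}_1}\mathbf c$ lies in the space via the logarithmic cutoff $\rho(\ln\varepsilon/\ln(|y|-1))$---together with the sufficiency half of Lemma~\ref{lem:characterization}. Without an equivalent step you cannot conclude that $u$, which by the previous paragraph typically jumps across $\mathbf S_1$, belongs to the weighted space at all.
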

As a direct consequence of Theorem~\ref{thm:2}, we first give the
proof of Theorem~\ref{thm:1}.

\begin{proof}[Proof of Theorem~\ref{thm:1}]
Let $u\in H^1_{[A_e,B_e]}(\mathbf{B}_2)^m$ be a solution to
(\ref{eq:cloaking Helmholtz}) corresponding to the cloaking problem
for $\{\mathbf{B}_2;A_e,B_e, f_e\}$, whereas $\tilde v\in
H^1(\mathbf{B}_2)^m$ be a solution to (\ref{eq:thm11}) corresponding
to the differential equation in the reference space
$\{\mathbf{B}_2;A_b,B_b,f_b\}$. By Theorem~\ref{thm:2}, we have
\[
u|_{\mathbf{B}_2\backslash\bar{\mathbf{B}}_1}=(F^{-1})^*(\tilde
v|_{\mathbf{B}_2\backslash\{0\}}),\quad \tilde
v|_{\mathbf{B}_2\backslash\{0\}}=F^*(u|_{\mathbf{B}_2\backslash\bar{\mathbf{B}}_1}).
\]
Next, let $\kappa(t)\in\mathscr{D}(\mathbb{R})$ be a smooth real
cut-off function such that $0\leq \kappa(t)\leq 1$ with
$\kappa(t)=1$ for $t<4/3$ and $\kappa(t)=0$ for $t>5/3$. For
arbitrary $\phi\in \mathscr{E}(\mathbf{B}_2)$, set
$\psi(y):=(1-\kappa(|y|))\phi(y)\in
\mathscr{T}^\infty(\mathbf{B}_2)$. By Green's identity, we have
\begin{align}
&\mathcal{Q}_{[A_c,B_c]}(u,\psi)=\mathcal{Q}_{[A_e,B_e]}(u,\psi)\nonumber\\
=&\langle f_e,\psi\rangle_{\mathbf{B}_2}+\langle
\Lambda^\omega_{A_e,B_e,f_e}(h),\gamma\psi \rangle_{\mathbf{S}_2}\nonumber\\
=&\langle f_c,\psi
\rangle_{\mathbf{B}_2\backslash\bar{\mathbf{B}}_1}+\langle
\Lambda^\omega_{A_e,B_e,f_e}(h),\gamma\psi
\rangle_{\mathbf{S}_2}.\label{eq:001}
\end{align}
Then, by change of variables in integrations in (\ref{eq:001}), and
using Lemma~\ref{lem:trans opt} and,
$F|_{\mathbf{S}_2}=\mbox{Identity}$ and $f_c=J^{-1}(F^{-1})^*f_b$,
we further have
\begin{equation}\label{eq:002}
\mathcal{Q}_{[A_b,B_b]}(\tilde v, \tilde\psi)=\langle f_b,\tilde\psi
\rangle_{\mathbf{B}_2}+\langle
\Lambda^\omega_{A_e,B_e,f_e}(h),\gamma\psi \rangle_{\mathbf{S}_2},
\end{equation}
where $\tilde\psi=F^*\psi\in\mathscr{E}(\mathbf{B}_2)$. By using
Green's identity again, we know
\[
\mathcal{Q}_{[A_b,B_b]}(\tilde v, \tilde\psi)=\langle f_b,
\tilde\psi\rangle_{\mathbf{B}_2}+\langle
\Lambda^\omega_{A_b,B_b,f_b}(h),\gamma\psi \rangle_{\mathbf{S}_2},
\]
which implies by (\ref{eq:002}) that
\[
\langle \Lambda^\omega_{A_e,B_e,f_e}(h),\gamma\psi
\rangle_{\mathbf{S}_2}=\langle
\Lambda^\omega_{A_b,B_b,f_b}(h),\gamma\psi \rangle_{\mathbf{S}_2}
\]
and hence
\[
\Lambda^\omega_{A_e,B_e,f_e}=\Lambda^\omega_{A_b,B_b,f_b}.
\]

The proof is completed.

\end{proof}

We proceed to the proof of Theorem~\ref{thm:2}. We first derive two
auxiliary lemmata characterizing the singularly weighted Sobolev
space $H^1_{[A_e,B_e]}(\mathbf{B}_2)$. In the following,
$\chi_\Omega$ denotes the characteristic function for a set
$\Omega\subset\mathbb{R}^2$.

\begin{lem}\label{lem:1}
Let $\mathbf{c}_1$ and $\mathbf{c}_2$ be two constant vectors in
$\mathbb{C}^m$. Then
\begin{equation}
\mathbf{c}_1\chi_{\bar{\mathbf{B}}_1}+\mathbf{c}_2\chi_{\mathbf{B}_2\backslash
\bar{\mathbf{B}}_1}\in H^1_{[A_e,B_e]}(\mathbf{B}_2)^m.
\end{equation}
\end{lem}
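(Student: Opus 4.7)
The plan is to exhibit a sequence $\{\phi_n\}\subset\mathscr{T}^\infty(\mathbf{B}_2)^m$ that is Cauchy in the norm $\mathcal{E}_{[A_e,W_e]}$ and converges pointwise (and in the weighted $L^2$) to $v:=\mathbf{c}_1\chi_{\bar{\mathbf{B}}_1}+\mathbf{c}_2\chi_{\mathbf{B}_2\setminus\bar{\mathbf{B}}_1}$. Since $\mathcal{E}_{[A_e,W_e]}(\phi)^2\geq \int_{\mathbf{B}_2}(W_e\phi)^*\phi\,dy$, the completion $H^1_{[A_e,B_e]}(\mathbf{B}_2)^m$ embeds continuously into $L^2(\mathbf{B}_2;W_e\,dy)^m$, so such a sequence identifies $v$ with a unique element of the completion.

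Two observations make this possible. First, any smooth radial function $\phi(y)=g(|y|)\mathbf{d}$ with constant $\mathbf{d}\in\mathbb{C}^m$ has $\partial\phi/\partial\theta\equiv 0$, so it automatically lies in $\mathscr{T}^\infty(\mathbf{B}_2)^m$. Second, the computation carried out in the paper just above the lemma, but with $\xi_l=\hat y_l\mathbf{1}_m$ replaced by $\xi_l=g'(r)\hat y_l\mathbf{d}$, collapses the potentially singular tangential contribution and yields, on the cloak,
\[
\sum_{\alpha,\beta=1}^2[A_e^{\alpha\beta}(y)\partial_\beta\phi]^*\partial_\alpha\phi=\frac{r-1}{r}|g'(r)|^2\sum_{p,q=1}^2\hat y_p\hat y_q[A_b^{pq}(F^{-1}(y))\mathbf{d}]^*\mathbf{d}\leq C\,\frac{r-1}{r}|g'(r)|^2|\mathbf{d}|^2,
\]
with $C$ a uniform upper bound coming from the regular $A_b$. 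Hence the gradient part of $\mathcal{E}_{[A_e,W_e]}(\phi)^2$ is dominated by the one-dimensional weighted integral $C'|\mathbf{d}|^2\int_1^2(r-1)|g'(r)|^2\,dr$, in which the weight vanishes linearly on $\mathbf{S}_1$.

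With this reduction I would fix a smooth $\psi:\mathbb{R}\to[0,1]$ with $\psi\equiv 0$ on $(-\infty,0]$, $\psi\equiv 1$ on $[1,\infty)$, and all derivatives vanishing at both $0$ and $1$; pick $\delta_n\downarrow 0^+$; and define
\[
g_n(r)=\psi\!\left(\frac{\log(r-1)-\log\delta_n}{\log(1/4)-\log\delta_n}\right)\quad\text{on } 1+\delta_n<r<5/4,
\]
extended by $0$ on $[0,1+\delta_n]$ and by $1$ on $[5/4,2]$. The flatness of $\psi$ at both endpoints ensures $g_n\in C^\infty[0,2]$, so $\phi_n(y):=\mathbf{c}_1+(\mathbf{c}_2-\mathbf{c}_1)g_n(|y|)\in\mathscr{T}^\infty(\mathbf{B}_2)^m$. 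A change of variables $u=(\log(r-1)-\log\delta_n)/(\log(1/4)-\log\delta_n)$ yields
\[
\int_1^2(r-1)|g_n'(r)|^2\,dr=\frac{\|\psi'\|_{L^2(0,1)}^2}{\log(1/(4\delta_n))}\longrightarrow 0,
\]
so the gradient seminorms of both $\phi_n$ and $\phi_n-\phi_m$ vanish as $n,m\to\infty$. Since $g_n(r)\to 1$ pointwise for every $r>1$ and $|g_n|\leq 1$, dominated convergence (weight $r-1$ integrable on $[1,2]$) gives $\phi_n\to v$ in $L^2(\mathbf{B}_2;W_e\,dy)^m$, in particular Cauchy there too. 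Hence $\{\phi_n\}$ is Cauchy in $\mathcal{E}_{[A_e,W_e]}$ and its limit in $H^1_{[A_e,B_e]}(\mathbf{B}_2)^m$ coincides, as a measurable function, with $v$.

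The main obstacle is the failure of the naive linearly-rescaled cutoff $g_n(r)=\sigma((r-1)/\epsilon_n)$: by scale invariance $\int(r-1)|g_n'|^2\,dr=\int_0^1 t|\sigma'(t)|^2\,dt$ is a fixed positive constant independent of $\epsilon_n$, so such sequences are bounded but never Cauchy. The logarithmic rescaling above circumvents this by exploiting the divergence of $\int_0^1 dt/t$, trading an arbitrary finite jump $\mathbf{c}_2-\mathbf{c}_1$ for arbitrarily small weighted radial energy.
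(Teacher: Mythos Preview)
Your proof is correct and follows essentially the same strategy as the paper's: approximate the step function by radial smooth functions built from a logarithmic rescaling, then verify that the weighted radial gradient energy $\int_1^2(r-1)|g'(r)|^2\,dr$ tends to zero. The paper uses the cutoff $\rho(\ln\varepsilon/\ln(|y|-1))$ (transition confined to $\{1<|y|<1+\varepsilon\}$) and estimates the resulting integral $\int_{\Gamma_\varepsilon}\frac{|\ln\varepsilon|^2}{|\ln(|y|-1)|^4}\cdot\frac{1}{|y|-1}\,dy\to 0$, whereas you use a cutoff linear in $\log(r-1)$ with a fixed outer endpoint $r=5/4$, obtaining the explicit value $\|\psi'\|_{L^2}^2/\log(1/(4\delta_n))$; both exploit the same divergence of $\int_0^1 dt/t$, and your closing remark on why a naive linearly rescaled cutoff fails is a useful clarification the paper does not spell out.
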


\begin{proof}
Since a constant function always belongs to
$H^1_{[A_e,B_e]}(\mathbf{B}_2)^m$, it suffices to show that
$\varphi(y):=\chi_{\bar{\mathbf{B}}_1}\mathbf{c}\in
H^1_{[A_e,B_e]}(\mathbf{B}_2)^m$ with $\mathbf{c}\in\mathbb{C}^m$ a
constant vector. Let $\rho\in \mathscr{D}(\mathbb{R})$ be a cut-off
function such that $0\leq \rho(t)\leq 1$ with $\rho(t)=1$ for
$t<1/2$ and $\rho(t)=0$ for $t>1$. Then, define for $\varepsilon>0$,
\begin{equation}
\varphi_\varepsilon(y)=\begin{cases} & \mathbf{c},\qquad\qquad\qquad  y\in \bar{\mathbf{B}}_{1},\\
& \rho(\frac{\ln \varepsilon}{\ln(|y|-1)})\mathbf{c},\quad\ \ \ y\in
\mathbf{B}_{2}\backslash \bar{\mathbf{B}}_{1}.
\end{cases}
\end{equation}
Obviously, $\varphi_\varepsilon\in
\mathscr{T}^\infty(\mathbf{B}_2)^m$. Next, we shall show
\begin{equation}\label{eq:convergence}
\lim_{\varepsilon\rightarrow
0^+}\|\varphi_\varepsilon-\varphi\|_{H^1_{[A_e,B_e]}(\mathbf{B}_2)^m}=0,
\end{equation}
which then implies that $\varphi\in
H^1_{[A_e,B_e]}(\mathbf{B}_2)^m$. In fact, we have
\begin{equation}\label{eq:estimate1}
\begin{split}
&\|\varphi_\varepsilon-\varphi\|^2_{H^1_{[A_e,B_e]}(\mathbf{B}_2)^m}=\mathcal{E}^2_{[A_e,W_e]}(\varphi_\varepsilon-\varphi)\\
\lesssim&\
\mathcal{E}^2_{[A_e,B_e]}(\varphi_\varepsilon-\varphi)=\mathcal{E}_{[A_c,B_c]}^2(\varphi_\varepsilon)\\
=&\
\bigg|\int_{\mathbf{B}_2\backslash\bar{\mathbf{B}}_1}\left(\sum_{\alpha,\beta\leq
1}(A_c^{\alpha\beta}\partial_\beta\varphi_\varepsilon)^*\partial_\alpha\varphi_\varepsilon+(B_c\varphi_\varepsilon)^*\varphi_\varepsilon\right)\
dy \bigg|
\end{split}
\end{equation}
By using the explicit expression for $B_c$ in (\ref{eq:cloak2})
together with the fact the $B_b$ is a regular material parameter, it
is straightforwardly shown that
\begin{equation}\label{eq:ineq1}
\begin{split}
\left|\int_{\mathbf{B}_2\backslash\bar{\mathbf{B}}_1}
(B_c\varphi)^*\varphi\ dy\right|\lesssim &
\int_{\mathbf{B}_2\backslash\bar{\mathbf{B}}_1}\left|\rho(\frac{\ln\varepsilon}{\ln(|y|-1)})\right|^2\
dy\\
\lesssim &\ \mbox{Vol}(\Gamma_\varepsilon)\rightarrow 0\ \ \ \
\mbox{as\ \ $\varepsilon\rightarrow 0^+$},
\end{split}
\end{equation}
where
\[
\Gamma_\varepsilon:=\{y\in\mathbb{R}^2; 1<|y|<1+\varepsilon\}.
\]
Next, by direct calculations, we have
\begin{equation}\label{eq:gradient}
\nabla\left(\rho(\frac{\ln\varepsilon}{\ln
(|y|-1)})\right)=\rho'(\frac{\ln\varepsilon}{\ln
(|y|-1)})\cdot\frac{\ln\varepsilon}{|\ln(|y|-1)|^2}\cdot\frac{\hat{y}}{|y|-1},
\end{equation}
where $\hat{y}:=y/\|y\|\in \mathbf{S}_1$.
By using (\ref{eq:cloak1}) and (\ref{eq:gradient}), we further have
\begin{equation}\label{eq:expression1}
\begin{split}
&\left(\nabla\rho(\frac{\ln\varepsilon}{\ln
(|y|-1)})\otimes\mathbf{c}\right)^*A_c^*\left(\nabla\rho(\frac{\ln\varepsilon}{\ln
(|y|-1)})\otimes\mathbf{c}\right)\\
=& \left[\rho'(\frac{\ln\varepsilon}{\ln
(|y|-1)})\right]^2\cdot\frac{|\ln\varepsilon|^2}{|\ln(|y|-1)|^4}\cdot\frac{1}{|y|-1}\cdot[(A_b(\hat{y}\otimes\mathbf{c}))^*(\hat{y}\otimes\mathbf{c})]
\end{split}
\end{equation}
By using (\ref{eq:expression1}), we can deduce
\begin{equation}\label{eq:estimate2}
\begin{split}
&\left|\int_{\mathbf{B}_2\backslash\bar{\mathbf{B}}_1}\sum_{\alpha,\beta\leq
1}
(A_c^{\alpha\beta}\partial_\beta\varphi_\varepsilon)^*\partial_\alpha\varphi_\varepsilon
\ dy\right|\\
=& \bigg|\int_{\mathbf{B}_2\backslash\bar{B}_1}
\left(\nabla\rho(\frac{\ln\varepsilon}{\ln
(|y|-1)})\otimes\mathbf{c}\right)^*A_c^*\left(\nabla\rho(\frac{\ln\varepsilon}{\ln
(|y|-1)})\otimes\mathbf{c}\right)\ dy\bigg| \\
\lesssim &
\left|\int_{\Gamma_\varepsilon}\frac{|\ln\varepsilon|^2}{|\ln(|y|-1)|^4}\cdot\frac{1}{|y|-1}\
dy\right|\rightarrow 0\ \ \ \mbox{as\ \ $\varepsilon\rightarrow
0^+$}.
\end{split}
\end{equation}
Combining (\ref{eq:estimate1}),(\ref{eq:ineq1}) and
(\ref{eq:estimate2}), we have (\ref{eq:convergence}). The proof is
completed.
\end{proof}

\begin{lem}\label{lem:characterization}
Let $u$ be a measurable $\mathbb{C}^m$-valued function on
$\mathbf{B}_2$. Then $u\in H_{[A_e,B_e]}^1(\mathbf{B}_2)^m$ if and
only if the following two conditions hold:
\begin{enumerate}
\item[(i)] $w:=u|_{\mathbf{B}_1}\in H^1(\mathbf{B}_1)^m$ and
\begin{equation}
\gamma^-w:=w|_{\mathbf{S}_1^-}=\mbox{constant},
\end{equation}
where $\gamma^-$ is the trace operator on $\mathbf{S}_1^-$, i.e. as
one approaches $\mathbf{S}_1$ from $\mathbf{B}_1$.

\item[(ii)] $v:=u|_{\mathbf{B}_2\backslash\bar{\mathbf{B}}_1}$ satisfies
\begin{equation}\label{eq:thm22}
\left|\int_{\mathbf{B}_2\backslash\bar{\mathbf{B}}_1}\sum_{\alpha,\beta\leq
1}(A_c^{\alpha\beta}\partial_\beta v)^*\partial_\alpha v\
dy\right|<\infty, \ \
\left|\int_{\mathbf{B}_2\backslash\bar{\mathbf{B}}_1} (B_c v)^* v\
dy\right|<\infty
\end{equation}
and
\begin{equation}\label{eq:thm23}
e_\theta\cdot\nabla v_p|_{\mathbf{S}_1^+}=0,\ \ \ p=1,2,\ldots,m,
\end{equation}
where $e_\theta$ is the unit angular directional vector on the
sphere $\mathbf{S}_1$.
\end{enumerate}
\end{lem}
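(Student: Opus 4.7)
The plan is to prove the two implications separately, using Lemma~\ref{lem:trans opt} to transfer between the singular cloak on $\mathbf{B}_2\setminus\bar{\mathbf{B}}_1$ and the regular reference medium on $\mathbf{B}_2\setminus\{0\}$ via the blow-up map $F$, and Lemma~\ref{lem:1} to absorb any possible mismatch between the traces of $u$ on $\mathbf{S}_1^-$ and $\mathbf{S}_1^+$ by a piecewise-constant function.

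\emph{Necessity.} Pick a sequence $\{\phi_n\}\subset\mathscr{T}^\infty(\mathbf{B}_2)^m$ with $\phi_n\to u$ in $\mathcal{E}_{[A_e,W_e]}$. Restricted to $\mathbf{B}_1$, the coefficients $A_a,B_a$ are regular, so by (\ref{eq:equivalent}) the weighted norm is equivalent to $\|\cdot\|_{H^1(\mathbf{B}_1)^m}$; this yields $w:=u|_{\mathbf{B}_1}\in H^1(\mathbf{B}_1)^m$, and because $\partial_\theta\phi_n|_{\mathbf{S}_1}=0$ forces $\phi_n|_{\mathbf{S}_1^-}$ to be independent of $\theta$, continuity of the trace operator makes $\gamma^- w$ constant, proving (i). Condition (\ref{eq:thm22}) follows from the finiteness of $\mathcal{E}_{[A_e,W_e]}(u)$ together with (\ref{eq:equivalent2}). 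For (\ref{eq:thm23}), each $\phi_n$ trivially satisfies $\partial_\theta\phi_n|_{\mathbf{S}_1^+}=0$, and the weighted norm controls the blown-up tangential energy $\frac{|y|}{|y|-1}|\partial_\theta v|^2$; passing to the limit in the appropriate weighted-trace sense transmits (\ref{eq:thm23}) to $v$.

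\emph{Sufficiency.} Set $\mathbf{c}_1:=\gamma^- w$ and let $\mathbf{c}_2\in\mathbb{C}^m$ be the constant value of $v$ on $\mathbf{S}_1^+$ produced by (\ref{eq:thm23}). Write
\[
u=u_0+u_1+u_2,
\]
with $u_0:=\mathbf{c}_1\chi_{\bar{\mathbf{B}}_1}+\mathbf{c}_2\chi_{\mathbf{B}_2\setminus\bar{\mathbf{B}}_1}$, $u_1:=(w-\mathbf{c}_1)\chi_{\mathbf{B}_1}$ and $u_2:=(v-\mathbf{c}_2)\chi_{\mathbf{B}_2\setminus\bar{\mathbf{B}}_1}$. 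Lemma~\ref{lem:1} gives $u_0\in H^1_{[A_e,B_e]}(\mathbf{B}_2)^m$. Since $u_1$ has zero $\mathbf{S}_1^-$-trace, it lies in $\widetilde{H}^1(\mathbf{B}_1)^m$ and is approximable in the ordinary $H^1$-norm (hence, by (\ref{eq:equivalent}), in the weighted norm on $\mathbf{B}_1$) by $\mathscr{D}(\mathbf{B}_1)^m$-functions, which extended by zero lie in $\mathscr{T}_0^\infty(\mathbf{B}_2)^m$. For $u_2$, I would pass to the reference space: by Lemma~\ref{lem:trans opt} and (\ref{eq:thm22}), $\tilde u_2:=F^*u_2\in H^1(\mathbf{B}_2\setminus\{0\})^m$, and (\ref{eq:thm23}) translates into $\tilde u_2$ vanishing at the origin. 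Approximating $\tilde u_2$ in $H^1$ by $C^\infty(\bar{\mathbf{B}}_2)^m$-functions supported away from $\{0\}$ (possible because a point in $\mathbb{R}^2$ has zero $H^1$-capacity, so a logarithmic cutoff of the type used in the proof of Lemma~\ref{lem:1} costs nothing in the limit), pushing them forward by $(F^{-1})^*$, and extending by zero across $\mathbf{S}_1$, produces $\mathscr{T}^\infty(\mathbf{B}_2)^m$-approximants of $u_2$ in the weighted norm.

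The hardest part is giving a precise meaning to (\ref{eq:thm23}) and to the constant $\mathbf{c}_2$ it produces. The tangential blow-up of $A_c$ at $\mathbf{S}_1^+$ destroys the classical $H^{1/2}$-trace of $v$ there, so the vanishing of $\partial_\theta v$ must be read either via the weighted structure coming from $\int\frac{|y|}{|y|-1}|\partial_\theta v|^2<\infty$ or, equivalently via $F^*$, as the existence of a quasi-continuous value of $\tilde v=F^*v$ at $0\in\mathbf{B}_2$. Matching these two viewpoints, and verifying that the $\mathbf{c}_2$ extracted in sufficiency is the same constant inherited from the approximating sequence in necessity, is where most of the delicate analysis of traces in the singularly weighted Sobolev space has to be carried out.
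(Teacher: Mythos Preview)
Your overall strategy coincides with the paper's: the decomposition $u=u_0+u_1+u_2$, the use of Lemma~\ref{lem:1} for the piecewise-constant jump, $H^1_0$-approximation on $\mathbf{B}_1$ for $u_1$, and the zero-capacity (``$(2,1)$-polar set'') argument for $u_2$ after pulling back by $F$ are exactly what the paper does. The paper also inserts the removable-singularity step you left implicit: from (\ref{eq:thm22}) one gets $\tilde v\in H^1(\mathbf{B}_2\setminus\{0\})^m$, and since no nonzero distribution in $H^{-1}(\mathbf{B}_2)$ is supported at a single point, the extension lies in $H^1(\mathbf{B}_2)^m$.

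Where the paper is more concrete than your sketch is precisely the point you flagged as hardest. Your suggestion of reading (\ref{eq:thm23}) through a quasi-continuous value of $\tilde v=F^*v$ at the origin does not work as stated: in two dimensions a single point has zero $H^1$-capacity, so quasi-continuity gives no pointwise information there, and ``$\tilde u_2$ vanishing at the origin'' has no intrinsic meaning for a generic $H^1$ function. The paper instead uses the Fourier expansion
\[
\tilde v_p(r,\theta)=\frac{1}{\sqrt{2\pi}}\sum_{n\in\mathbb{Z}}\tilde v_p^{(n)}(r)e^{in\theta}
\]
and cites \cite{Bernardi1999aa} for the fact that $\tilde v_p\in H^1(\mathbf{B}_2)$ forces $\tilde v_p^{(n)}(0)=0$ for every $n\neq 0$. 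Pushed forward by $(F^{-1})^*$ this gives $v_p^{(n)}(1)=0$ for $n\neq 0$, which is exactly (\ref{eq:thm23}), and simultaneously identifies your constant $\mathbf{c}_2$ as the zeroth mode $\tilde v_p^{(0)}(0)$. This Fourier route cleanly bypasses the weighted-trace subtleties you anticipated.
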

\begin{proof}
First, we show conditions (i) and (ii) are necessary for  $u\in
H^1_{[A_e,B_e]}(\mathbf{B}_2)^m$.

Clearly, we have
\[
\mathcal{E}_{[A_a,B_a]}(w)\lesssim\|u\|_{H^1_{[A_e,B_e]}(\mathbf{B}_2)^m},
\]
which together with the fact $\{\mathbf{B}_1;A_a,B_a\}$ is regular
implies that
\[
\|w\|_{H^1(\mathbf{B}_1)^m}\lesssim
\|u\|_{H^1_{[A_e,B_e]}(\mathbf{B}_2)^m},\quad \mbox{i.e.,}\quad w\in
H^1(\mathbf{B}_1)^m.
\]
Let $\{\phi_n\}_{n=1}^\infty\subset
\mathscr{T}^\infty(\mathbf{B}_2)^m$ be such that
\[
\|\phi_n-u\|_{H^1_{[A_e,B_e]}(\mathbf{B}_2)^m}\rightarrow 0\quad
\mbox{as $n\rightarrow\infty$}.
\]
Then, we obviously have
\[
\|\phi_n|_{\mathbf{B}_1}-w\|_{H^1(\mathbf{B}_1)^m}\rightarrow 0\quad
\mbox{as $n\rightarrow 0$}.
\]
Hence,
\[
\|\gamma^-\phi_n-\gamma^- w\|_{H^{1/2}(\mathbf{S}_1)^m}\lesssim
\|\phi_n|_{\mathbf{B}_1}-w\|_{H^1(\mathbf{B}_1)^m}\rightarrow 0\quad
\mbox{as $n\rightarrow 0$}.
\]
Noting $\partial\phi_n/\partial\theta|_{\mathbf{S}_1}=0$, we know
$\gamma^-\phi_n$ are constants independent of the angular variable
$\theta$. Therefore, $\gamma^-w$ is constant. Whereas for
$v:=u|_{\mathbf{B}_2\backslash\bar{\mathbf{B}}_1}$, it is trivial to
see that (\ref{eq:thm22}) holds. To see (\ref{eq:thm23}), we let
$\tilde{v} = F^* v$. Using the change of variables in
(\ref{eq:thm22}), we have
\[
|\int_{\mathbf{B}_2\backslash\{0\}}\sum_{\alpha,\beta\leq
1}(A_b^{\alpha\beta}\partial_\beta)^*\partial_\alpha\tilde v\
dx|<\infty\ \ \ \mbox{and}\ \ \
|\int_{\mathbf{B}_2\backslash\{0\}}(B_b\tilde v)^*\tilde v\
dx|<\infty.
\]
Since the reference space $\{ \mathbf{B}_2; A_b, B_b \}$ is regular,
we further have by using (\ref{eq:algebraic}) and
(\ref{eq:algebraic2})
\[
\int_{\mathbf{B}_2\backslash\{0\}}\sum_{\alpha\leq
1}|\partial_\alpha\tilde v|^2\ dx<\infty\ \ \mbox{and}\ \
\int_{\mathbf{B}_2\backslash\{0\}}|\tilde v|^2\ dx<\infty.
\]
Set $\Psi_p=\nabla\tilde{v}_p|_{\mathbf{B}_2\backslash\{0\}},\
p=1,2,\ldots,m $. We have $\Psi_p\in
L^2(\mathbf{B}_2\backslash\{0\})^{2\times 1}$. Extending $\Psi$ and
$\tilde v$ on $\{0\}$ (e.g., by setting to be zero), and using the
same notations for the extensions, we have
$$
\Psi_p \in (L^2(\mathbf{B}_2))^{2\times 1} \mbox{ and } \tilde{v}_p
\in L^2(\mathbf{B}_2) .
$$
The difference $\nabla \tilde{v}_p - \Psi_p$ belongs to
$H^{-1}(\mathbf{B}_2)$ and it is supported on $\{0\}$. Since no
non-zero distribution in $H^{-1}(\mathbf{B}_2)$ is supported on
$\{0\}$, we see $\nabla\tilde{v}_p=\Psi_p$. Hence, $\tilde{v}_p\in
H^1(\mathbf{B}_2)$ and therefore $\tilde v\in H^1(\mathbf{B})^m$.


Let the Fourier decomposition of $\tilde v_p,\ 1\leq p\leq m,$ be
given by
\begin{equation} \label{eqn:Fourier_decompo}
\tilde v_p(r,\theta)=\frac{1}{\sqrt{2\pi}}\sum_{n
\in\mathbb{Z}}\tilde{v}_{p}^{(n)}(r)e^{in\theta}.
\end{equation}
In \cite{Bernardi1999aa}, it is proved
\begin{equation}
\label{eqn:Trace_Fourier_Coeff} \tilde{v}_p^{(n)}(0)=0,\ \ n \neq 0.
\end{equation}
Formally, we write $\tilde{v}_p(0, \theta) = \tilde{v}_p^{(0)}(0)$.
Since $v_p \chi_{B_2 \backslash \overline{B_1}} =(F^{-1})^*\tilde
v_p$, we have
\begin{equation*}
\forall \ 1 < r < 2,\ v_p(r,\theta) = \frac{1}{\sqrt{2\pi}}\sum_{n
\in\mathbb{Z}} v_p^{(n)}(r)e^{in\theta}\ \ \mbox{with}\ \
v_p^{(n)}(r) = \tilde{v}_p^{(n)}(2(r-1)) .
\end{equation*}
By (\ref{eqn:Trace_Fourier_Coeff}), we have
\[
v_p^{(n)}(1) = \tilde{v}_p^{(n)}(0)=0 \ \ \mbox{for } n \neq 0 ,
\]
which  implies
\begin{equation}\label{eqn:zero_trace}
\frac{1}{r}\frac{\partial
v_p}{\partial\theta}(r,\theta)\bigg|_{r=1}=0.
\end{equation}

Next, we show that (i) and (ii) are also sufficient conditions for a
measurable function $u$ on $\mathbf{B}_2$ to belong to
$H^1_{[A_e,B_e]}(\mathbf{B}_2)^m$. We first assume that
$u|_{\mathbf{B}_2\backslash\mathbf{B}_1}=0$. By (i), let
$\mathbf{c}_0$ be a constant vector such that
$u|_{\mathbf{S}_1^-}=\mathbf{c}_0$. According to Lemma~\ref{lem:1},
it suffices to show that
$\hat{u}:=u-\mathbf{c}_0\chi_{{\mathbf{B}}_1}\in
H^1_{[A_e,B_e]}(\mathbf{B}_1)^m$. Clearly,
\[
\hat{w}:=\hat{u}\chi_{\mathbf{B}_1}=w-\mathbf{c}_0\chi_{{\mathbf{B}}_1}\in
H_0^1(\mathbf{B}_1)^m.
\]
Hence, there exists $\{\varphi_n\}_{n=1}^\infty\subset \
\mathscr{D}(\mathbf{B}_1)^m$ such that
\[
\|\varphi_n-\hat{w}\|_{H^1(\mathbf{B}_1)^m}\rightarrow 0\quad
\mbox{as $n\rightarrow \infty$}.
\]
Let $\phi_n\in \mathscr{T}^\infty(\mathbf{B}_2)^m$ be such that
$\phi_n|_{\mathbf{B}_1}=\varphi_n$ and
$\phi_n|_{\mathbf{B}_2\backslash \mathbf{B}_1}=0$. Using the fact
that $\{\mathbf{B}_1;A_a,B_a\}$ is regular and
$\hat{u}|_{\mathbf{B}_2\backslash\bar{\mathbf{B}}_1}=0$, we have
\begin{equation*}
\begin{split}
\|\phi_n-\hat{u}\|_{H^1_{[A_e,B_e]}(\mathbf{B}_2)^m}^2=& \mathcal{E}^2_{[A_a,B_a]}(\varphi_n-\hat{w})\\
\lesssim & \
\|\varphi_n-\hat{w}\|_{H^1(\mathbf{B}_1)^m}^2\rightarrow
0\quad\mbox{as $n\rightarrow\infty$},
\end{split}
\end{equation*}
which implies that $\hat{u}\in H^1_{(A_e,B_e)}(\mathbf{B}_1)^m$.

Now, let $u$ be a measurable function satisfying (i) and (ii). As is
shown above, $u\chi_{\mathbf{B}_1}\in
H^1_{[A_e,B_e]}(\mathbf{B}_2)^m$, one only needs to show that
$u-u\chi_{\mathbf{B}_1}\in H^1_{[A_e,B_e]}(\mathbf{B}_2)^m$; that
is, $u$ vanishes inside $\mathbf{B}_1$. By (\ref{eq:thm23}), we know
$u|_{\mathbf{S}_1^+}$ is constant independent of the angular
variable $\theta$. Using similar argument as earlier by substraction
of a Heaviside function from $u$ together with Lemma~\ref{lem:1}, we
can further assume that $u$ vanishes on $\bar{\mathbf{B}}_1$.
Moreover, without loss of generality, we can also assume that $u$
vanishes near $\mathbf{S}_2$. Let $\tilde
v:=F*(u|_{\mathbf{B}_2\backslash\bar{\mathbf{B}}_1})=F^*v$.
According to our earlier argument, $v\in H^1(\mathbf{B}_2)^m$ (note
here we identify $v$ and its extension on $\{0\}$). Since $v$
vanishes near $\mathbf{S}_2$ and $\{0\}$ is a $(2,1)$-polar set,
there are $\tilde\phi_n\in
\mathscr{D}(\mathbf{B}_2\backslash\{0\})^m$ such that
\[
\|\tilde\phi_n-\tilde{v}\|_{H^1(\mathbf{B}_2)^m}\rightarrow
0\quad\mbox{as $n\rightarrow\infty$}.
\]
Let $\phi_n\in \mathscr{T}^\infty(\bar{\mathbf{B}}_2)^m$ be such
that
\[
\phi_n|_{\mathbf{B}_2\backslash\bar{\mathbf{B}}_1}=(F^{-1})^*\tilde{\phi}_n\quad\mbox{and}\quad
\phi_n|_{\bar{\mathbf{B}}_1}=0.
\]
Using $u|_{\bar{\mathbf{B}}_1}=0$ and the fact that
$\{\mathbf{B}_2;A_b,B_b\}$ is regular, we have
\begin{align*}
\|\phi_n-u\|_{H^1_{[A_e,B_e]}(\mathbf{B}_2)^m}\lesssim\ & \mathcal{E}_{[\mathbf{B}_2\backslash\bar{\mathbf{B}}_1; A_c, B_c]}(\phi_n-v)\\
=\ & \mathcal{E}_{[A_b,B_b]}(\tilde\phi_n-\tilde v)\\
\lesssim\ & \|\tilde\phi_n-\tilde
v\|_{H^1(\mathbf{B}_2)^m}\rightarrow 0\quad\mbox{as
$n\rightarrow\infty$},
\end{align*}
which implies that $u\in H^1_{[A_e,B_e]}(\mathbf{B}_2)^m$.

The proof is completed.
\end{proof}

Now, we are ready to present the proof of Theorem~\ref{thm:2}.

\begin{proof}[Proof of Theorem~\ref{thm:2}]
We first show that if $u\in H^1_{[A_e,B_e]}(\mathbf{B}_2)^m$ is a
solution to (\ref{eq:cloaking Helmholtz}) and
(\ref{eq:distributional equation}), then we must have the decoupled
problems (\ref{eq:thm11}) and (\ref{eq:thm12}).

Let $v:=u|_{\mathbf{B}_2\backslash\bar{\mathbf{B}}_1}$ and $\tilde
v=F^*\tilde{v}$. In the following, as in the proof of
Lemma~\ref{lem:characterization}, we identify $\tilde v$ and its
$H^1$-extension from $\mathbf{B}_2\backslash\{0\}$ to
$\mathbf{B}_2$. Clearly, $\tilde v$ satisfies (\ref{eq:thm11}). On
the other hand, let $w:=u|_{\mathbf{B}_1}$, then $\gamma^-w$ is a
constant vector independent of the angular variable $\theta$ by
Lemma~\ref{lem:characterization}. So, it is trivial to see that $w$
satisfies (\ref{eq:thm12}). Hence, we only need to show
(\ref{eq:thm13}), whereas the determination of $\mathbf{c}_0$ from
(\ref{eq:thm13}) will be discussed in the subsequent
Theorem~\ref{thm:wellposedness}.

Set
\[
\Sigma_\varepsilon^+:=\{y\in\mathbb{R}^2; 1<|y|<1+\varepsilon/2\}\ ,
\ \Sigma_\varepsilon^-:=\{y\in\mathbb{R}^2; 1-\varepsilon/2<|y|<1\},
\]
and
\[
\Upsilon_\varepsilon^+:=\{y\in\mathbb{R}^2; |y|=1+\varepsilon/2\}\ ,
 \ \Upsilon_\varepsilon^-:=\{y\in\mathbb{R}^2;
|y|=1-\varepsilon/2\}.
\]
We first show for $\varphi\in\mathscr{T}_0^\infty(\mathbf{B}_2)^m$
\begin{equation}\label{eq:a1}
\lim_{\varepsilon\rightarrow
0^+}\left|\int_{\Sigma_\varepsilon^+}\left(\sum_{\alpha,\beta\leq
1}(A_e^{\alpha\beta}\partial_\beta
u)^*\partial_\alpha\varphi-\omega^2(B_e u)^*\varphi-f_e^*\varphi\
\right)dy\right|=0.
\end{equation}
In fact, by using the expression of $(A_e,B_e)$ in
$\Sigma_\varepsilon^+$ and the change of variables in integrations,
together with $f_e$ being $J^{-1}(F^{-1})^*f_b$ in
$\Sigma^+_\varepsilon$ and $\tilde{v}$ being the $H^1$-extension of
$F^*v$ from $\mathbf{B}_2\backslash\{0\}$ to $\mathbf{B}_2$, we have
\begin{equation}\label{eq:e01}
\begin{split}
&\left|\int_{\Sigma_\varepsilon^+}\left(\sum_{\alpha,\beta\leq
1}(A_e^{\alpha\beta}\partial_\beta
u)^*\partial_\alpha\varphi-\omega^2(B_e u)^*\varphi-f_e^*\varphi\
\right)dy\right|\\
=\ & \left|\int_{\mathbf{B}_\varepsilon}\left(\sum_{\alpha,\beta\leq
1}(A_b^{\alpha\beta}\partial_\beta \tilde
v)^*\partial_\alpha\tilde\varphi-\omega^2(B_b \tilde
v)^*\tilde\varphi-f_b^*\tilde \varphi\ \right)dx\right|\\
\lesssim\ & \left(\|\tilde
v\|_{H^1(\mathbf{B}_\varepsilon)^m}+\|f_b\|_{\widetilde{H}^{-1}(\mathbf{B}_\varepsilon)^m}\right)\|\tilde\varphi\|_{H^1(\mathbf{B}_\varepsilon)^m}
\rightarrow 0 \quad\mbox{as $\varepsilon\rightarrow 0^+$},
\end{split}
\end{equation}
where $\tilde \phi\in H^1(\mathbf{B}_2)^m$ is the continuous
extension of $F^*(\phi|_{\mathbf{B}_2\backslash
\bar{\mathbf{B}}_1})$ from $\mathbf{B}_2\backslash\{0\}$ to
$\mathbf{B}_2$. On the other hand, noting $u|_{\mathbf{B}_1}=w\in
H^1(\mathbf{B}_1)^m$ and $f_a|_{\mathbf{B}_1}\in
\widetilde{H}^{-1}(\mathbf{B}_1)^m$, it is straightforward to verify
\begin{equation}\label{eq:e02}
\begin{split}
&\lim_{\varepsilon\rightarrow
0^+}\left|\int_{\Sigma_\varepsilon^-}\left(\sum_{\alpha,\beta\leq
1}(A_e^{\alpha\beta}\partial_\beta
u)^*\partial_\alpha\varphi-\omega^2(B_e u)^*\varphi-f_e^*\varphi\
\right)dy\right|\\
=& \lim_{\varepsilon\rightarrow
0^+}\left|\int_{\Sigma_\varepsilon^-}\left(\sum_{\alpha,\beta\leq
1}(A_a^{\alpha\beta}\partial_\beta
w)^*\partial_\alpha\varphi-\omega^2(B_e w)^*\varphi-f_a^*\varphi\
\right)dy\right|\\
=&\ 0.
\end{split}
\end{equation}

Next, by (\ref{eq:distributional equation}) and using (\ref{eq:e01})
and (\ref{eq:e02}), together with integration by parts, we have
\begin{equation}\label{eq:e03}
\begin{split}
0=&\int_{\mathbf{B}_2}\left(\sum_{\alpha,\beta\leq
1}(A_e^{\alpha\beta}\partial_\beta
u)^*\partial_\alpha\varphi-\omega^2(B_e\varphi)^*\varphi-f_e^*\phi\right)\
dy\\
=&
\int_{\mathbf{B}_2\backslash\mathbf{S}_1}\left(\sum_{\alpha,\beta\leq
1}(A_e^{\alpha\beta}\partial_\beta
u)^*\partial_\alpha\varphi-\omega^2(B_e\varphi)^*\varphi-f_e^*\phi\right)\
dy\\
=&\lim_{\varepsilon\rightarrow
0^+}\int_{\mathbf{B}_2\backslash(\Sigma_\varepsilon^+\cup\mathbf{S}_1\cup\Sigma_\varepsilon^-)}
\left(\sum_{\alpha,\beta\leq 1}(A_e^{\alpha\beta}\partial_\beta
u)^*\partial_\alpha\varphi-\omega^2(B_e\varphi)^*\varphi-f_e^*\phi\right)
dy\\
=&\lim_{\varepsilon\rightarrow
0^+}\left(\int_{\Upsilon_\varepsilon^+}-(\sum_{\alpha,\beta\leq
1}\nu_\alpha A_c^{\alpha\beta}\partial_\beta u)^*\varphi
dS(y)+\int_{\Upsilon_\varepsilon^-}-(\sum_{\alpha,\beta\leq
1}\nu_\alpha A_a^{\alpha\beta}\partial_\beta u)^*\varphi
dS(y)\right)
\end{split}
\end{equation}
where, by a bit abuse of notations, $\nu$ denotes the exterior unit
normal to respective domain, $\Sigma_\varepsilon^+$ and
$\Sigma_\varepsilon^-$. Next, we estimate the integral in the right
hand side of the last equation in (\ref{eq:e03}). In the sequel, we
denote by $\tilde \nu$ the exterior unit normal vector to the domain
$B_\varepsilon$. Using the change of variables in integration and
the fact that
$\tilde{v}=F^*(u|_{\mathbf{B}_2\backslash\bar{\mathbf{B}}_1})$
belongs to $H^1(\mathbf{B}_2)^m$ and satisfies (\ref{eq:thm11}), we
have
\begin{equation}\label{eq:e04}
\begin{split}
&\left|\int_{\Upsilon_\varepsilon^+}(\sum_{\alpha,\beta\leq
1}\nu_\alpha A_c^{\alpha\beta}\partial_\beta u)^*\varphi
dS(y)\right|\\
=&\left|\int_{\partial\mathbf{B}_\varepsilon}(\sum_{\alpha,\beta\leq
1}\tilde{\nu}_\alpha A_b^{\alpha\beta}\partial_\beta \tilde
v)^*\tilde\varphi dS(x)\right|\\
=&\left|\int_{\mathbf{B}_\varepsilon}\left(\sum_{\alpha,\beta\leq
1}(A_b^{\alpha\beta}\partial_\beta\tilde
v)^*\partial_\alpha\tilde\varphi-\omega^2(B_b\tilde
v)^*\tilde\varphi-f_b^*\tilde\varphi\right)\
dx\right|\\
\lesssim& \left(\|\tilde
v\|_{H^1(\mathbf{B}_\varepsilon)^m}+\|f_b\|_{\widetilde{H}^{-1}(\mathbf{B}_\varepsilon)^m}\right)
\|\tilde\varphi\|_{H^1(\mathbf{B}_\varepsilon)^m}\rightarrow
0\quad\mbox{as\ $\varepsilon\rightarrow 0^+$}.
\end{split}
\end{equation}
By (\ref{eq:e03}) and (\ref{eq:e04}), we see
\[
\int_{\mathbf{S}_1}(\sum_{\alpha,\beta\leq 1}\nu_\alpha
A_a^{\alpha\beta}\partial_\beta w)^*\varphi
dS(y)=\lim_{\varepsilon\rightarrow 0^+}
\int_{\Upsilon_\varepsilon^-}(\sum_{\alpha,\beta\leq 1}\nu_\alpha
A_a^{\alpha\beta}\partial_\beta w)^*\varphi dS(y)=0
\]
which implies by the fact $\varphi|_{\mathbf{S}_1}$ could be an
arbitrary constant vector from $\mathbb{C}^m$ that
\[
\int_{\mathbf{S}_1}\sum_{\alpha,\beta\leq 1}\nu_\alpha
A_a^{\alpha\beta}\partial_\beta w \ dS(y)=0.
\]

Now, let $\tilde v\in H^1(\mathbf{B}_2)^m$ and $w\in
H^1(\mathbf{B}_1)^m$ be solutions, respectively to (\ref{eq:thm11})
and (\ref{eq:thm12}). Set $u$ be $(F^{-1})^*\tilde v$ on
$\mathbf{B}_2\backslash \bar{\mathbf{B}}_1$ and be $w$ on
$\mathbf{B}_{1}$, and extend it to $B_{2}$ by setting it be zero on
$\mathbf{S}_1$. By Lemma~\ref{lem:characterization}, we see that
$u\in H_{[A_e,B_e]}^1(\mathbf{B}_2)^m$. Moreover, it is readily seen
that one also has
\begin{equation}\label{eq:en1}
\mathcal{P}_{[A_c,B_c]} u=f_c\ \ \mbox{on
$\mathbf{B}_2\backslash\bar{\mathbf{B}}_1$},\ \ \
u|_{\mathbf{S}_2}=h,
\end{equation}
and
\begin{equation}\label{eq:en2}
\mathcal{P}_{[A_a,B_a]}u=f_a\ \ \mbox{on $\mathbf{B}_1$}.
\end{equation}
By Lemma~\ref{lem:measurable}, together with (\ref{eq:en1}) and
(\ref{eq:en2}), we have for any
$\varphi\in\mathscr{T}_0^\infty(\mathbf{B}_2)^m$
\begin{equation}\label{eq:en}
\begin{split}
&\mathcal{Q}_{[A_e,B_e]}(u,\varphi)-\langle
f_e,\varphi\rangle_\Omega\\
=&\lim_{\varepsilon\rightarrow
0^+}\int_{\mathbf{B}_2\backslash(\Sigma_\varepsilon^+\cup\mathbf{S}_1\cup\Sigma_\varepsilon^-)}
\left(\sum_{\alpha,\beta\leq 1}(A_e^{\alpha\beta}\partial_\beta
u)^*\partial_\alpha\varphi-\omega^2(B_e\varphi)^*\varphi-f_e^*\phi\right)
dy\\
=&\lim_{\varepsilon\rightarrow
0^+}\left(\int_{\Upsilon_\varepsilon^+}-(\sum_{\alpha,\beta\leq
1}\nu_\alpha A_c^{\alpha\beta}\partial_\beta u)^*\varphi
dS(y)+\int_{\Upsilon_\varepsilon^-}-(\sum_{\alpha,\beta\leq
1}\nu_\alpha A_a^{\alpha\beta}\partial_\beta u)^*\varphi
dS(y)\right)
\end{split}
\end{equation}

Again, using the estimate in (\ref{eq:e04}), we know
\begin{equation}\label{eq:en3}
\lim_{\varepsilon\rightarrow
0^+}\int_{\Upsilon_\varepsilon^+}(\sum_{\alpha,\beta\leq
1}\nu_\alpha A_c^{\alpha\beta}\partial_\beta u)^*\varphi\ dS(y)=0
\end{equation}
On the other hand, by noting $u=w$ on $\mathbf{B}_1$, and using
(\ref{eq:thm13}) and $\varphi|_{\mathbf{S}_1}$ is a constant vector
in $\mathbb{C}^m$, we further have
\begin{equation}\label{eq:en4}
\lim_{\varepsilon\rightarrow
0^+}\int_{\Upsilon_\varepsilon^-}(\sum_{\alpha,\beta\leq
1}\nu_\alpha A_a^{\alpha\beta}\partial_\beta u)^*\varphi\ dS(y)=0
\end{equation}
Finally, by (\ref{eq:en})--(\ref{eq:en4}), we have
\[
\mathcal{Q}_{[A_e,B_e]}(u,\varphi)=\langle f_e,\varphi
\rangle_\Omega,\ \ \forall
\varphi\in\mathscr{T}_0^\infty(\mathbf{B}_2).
\]
That is, $u\in H^1_{[A_e,B_e]}(\mathbf{B}_2)^m$ is a solution to
(\ref{eq:distributional equation}).

The proof is complete.
\end{proof}

In the rest of this section, we study the interior
problem~(\ref{eq:thm12})--(\ref{eq:thm13}). To that end, we
introduce the following closed subspace of $H^1(\mathbf{B}_1)^m$,
\begin{equation}\label{eq:subspace}
\mathscr{W}:=\{g\in H^1(\mathbf{B}_1)^m;\ \gamma
g|_{\mathbf{S}_1}=\mbox{constant}\}.
\end{equation}
Then, (\ref{eq:thm12})--(\ref{eq:thm13}) is weakly formulated as
\begin{equation}\label{eq:eigenvalue}
w\in\mathscr{W}\ \ \mbox{and}\ \
\mathcal{Q}_{[A_a,B_a]}(w,g)=\langle f_a, g \rangle_{\mathbf{B}_1},\
\ \forall g\in\mathscr{W}.
\end{equation}
The corresponding homogeneous problem is
\begin{equation}\label{eq:eigenvalue1}
w\in\mathscr{W}\ \ \mbox{and}\ \ \mathcal{Q}_{[A_a,B_a]}(w,g)=0,\ \
\forall g\in\mathscr{W},
\end{equation}
and by noting (\ref{eq:symmetric}), its adjoint problem is
\begin{equation}\label{eq:adjoint}
v\in\mathscr{W}\ \ \mbox{and}\ \ \mathcal{Q}_{[A,B^*]}(v,g)=0,\ \
\forall g\in\mathscr{W}.
\end{equation}

\begin{thm}\label{thm:wellposedness}
Let $W$ denote the set of solutions to (\ref{eq:eigenvalue1}). Then,
either (i)~$W=\{0\}$; or (ii)~$\mbox{dim}\ W=n$ for some finite
$n\geq 1$. In the case (i), the problem~(\ref{eq:eigenvalue1}) is
uniquely solvable. Whereas in case (ii), the homogeneous adjoint
problem (\ref{eq:adjoint}) also has exactly $n$ linearly independent
solutions, say $v_1, v_2,\ldots, v_n\in \mathscr{W}$, and the
inhomogeneous problem (\ref{eq:eigenvalue}) is solvable if{f}
\begin{equation}\label{eq:compatibility}
\langle v_p, f_a\rangle=0\ \ \mbox{for\ $1\leq p\leq n$}.
\end{equation}
\end{thm}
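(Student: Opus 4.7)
The plan is to recognize the problem (\ref{eq:eigenvalue}) as a variational problem on the closed Hilbert subspace $\mathscr{W}\subset H^1(\mathbf{B}_1)^m$ and to apply the standard Fredholm alternative, using the fact that $\{\mathbf{B}_1;A_a,B_a\}$ is a regular medium so that the G\aa{}rding-type estimate (\ref{eq:coercivity}) is available.

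First I would verify that $\mathscr{W}$ is a closed subspace of $H^1(\mathbf{B}_1)^m$: the trace map $\gamma:H^1(\mathbf{B}_1)^m\to H^{1/2}(\mathbf{S}_1)^m$ is continuous, and the $m$-dimensional subspace of constant $\mathbb{C}^m$-valued functions on $\mathbf{S}_1$ is closed in $H^{1/2}(\mathbf{S}_1)^m$, so $\mathscr{W}=\gamma^{-1}(\mathbb{C}^m)$ is closed and inherits a Hilbert structure. Next, for $\lambda_0>0$ sufficiently large, the shifted form
\[
\mathcal{Q}_0(w,g):=\mathcal{Q}_{[A_a,B_a]}(w,g)+\lambda_0\int_{\mathbf{B}_1}w^*g\,dx
\]
is bounded and coercive on the full space $H^1(\mathbf{B}_1)^m$ by (\ref{eq:algebraic}), (\ref{eq:algebraic2}) and (\ref{eq:coercivity}), and hence also on $\mathscr{W}$. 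By Lax--Milgram, the associated operator $T_0:\mathscr{W}\to\mathscr{W}^*$ is a topological isomorphism.

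Writing $T:\mathscr{W}\to\mathscr{W}^*$ for the operator induced by $\mathcal{Q}_{[A_a,B_a]}$, I would then decompose $T=T_0-K$, where $K$ corresponds to the zeroth-order multiplier $w\mapsto(\lambda_0 I+\omega^2 B_a)w$ composed with the inclusion $L^2(\mathbf{B}_1)^m\hookrightarrow\mathscr{W}^*$. Rellich--Kondrachov renders the embedding $H^1(\mathbf{B}_1)^m\hookrightarrow L^2(\mathbf{B}_1)^m$ compact, so $K$ is compact and $T_0^{-1}K$ is a compact endomorphism of $\mathscr{W}$. The Fredholm alternative for $I-T_0^{-1}K$ produces the stated dichotomy: either $W=\ker T=\{0\}$, so that $T$ is invertible and (\ref{eq:eigenvalue}) is uniquely solvable for every $f_a\in\widetilde{H}^{-1}(\mathbf{B}_1)^m$; or $\dim W=n$ is finite, in which case the range of $T$ has codimension $n$ and coincides with the annihilator of an $n$-dimensional cokernel. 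The symmetry hypothesis (\ref{eq:symmetric}) identifies the Banach adjoint of $T$ with the operator associated to $\mathcal{Q}_{[A_a,B_a^*]}$ on $\mathscr{W}$, so the cokernel is exactly the solution space of (\ref{eq:adjoint}), spanned by some $v_1,\ldots,v_n\in\mathscr{W}$. The solvability criterion then reduces to the orthogonality conditions (\ref{eq:compatibility}).

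The hard part is not the Fredholm machinery, which is routine once the correct trial/test space is in place, but rather the bookkeeping for the nonstandard boundary condition: one should check that testing against all $g\in\mathscr{W}$ simultaneously encodes the interior PDE (\ref{eq:thm12}) (by taking $g\in\mathscr{D}(\mathbf{B}_1)^m$) and the integral constraint (\ref{eq:thm13}) (by taking $g$ with an arbitrary constant trace on $\mathbf{S}_1$), and that the adjoint problem naturally lives in the same space $\mathscr{W}$. The symmetry condition (\ref{eq:symmetric}) is exactly what guarantees this closure under adjoints, so no conceptual obstacle remains once $\mathscr{W}$ has been identified as the correct functional setting.
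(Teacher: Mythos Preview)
Your proposal is correct and follows essentially the same route as the paper: both recognize (\ref{eq:eigenvalue}) as a variational problem on the closed Hilbert subspace $\mathscr{W}$, use the compact embedding $\mathscr{W}\hookrightarrow L^2(\mathbf{B}_1)^m$ to obtain a Fredholm operator of index zero, and then read off the dichotomy and the solvability condition (\ref{eq:compatibility}) from the Fredholm alternative together with the symmetry (\ref{eq:symmetric}). The only difference is presentational: the paper compresses the argument into ``$\mathcal{L}:\mathscr{W}\to\mathscr{W}^*$ is Fredholm of index $0$ since the inclusion into the pivot space $L^2$ is compact,'' whereas you spell out the underlying G\aa{}rding shift and the decomposition $T=T_0-K$ explicitly.
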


\begin{proof}
Consider the operator
$\mathcal{L}:\mathscr{W}\rightarrow\mathscr{W}^*$ determined by
$\mathcal{Q}_{[A_a,B_a]}$ in the standard way as
\[
(\mathcal{L}g_1)(g_2)=\mathcal{Q}_{[A_a,B_a]}(g_1,g_2).
\]
Let $\mathscr{H}=L^2(\mathbf{B}_1)^m$ act as the pivot space.
Clearly, the inclusion $\mathscr{W}\subseteq\mathscr{H}$ is compact.
So, $\mathcal{L}$ is Fredholm with index 0. Furthermore, each
distribution $f_a\in\widetilde{H}^{-1}(\mathbf{B}_1)^m$ gives rise
to a unique functional $\mathrm{F}_a\in\mathscr{W}^*$, defined by
$\mathrm{F}_a(g)=\langle f_a, g \rangle_{\mathbf{B}_1}$ for $g\in
\mathscr{W}$. Thus, the equation (\ref{eq:eigenvalue}) is equivalent
to
\begin{equation}\label{eq:fredholm}
\mathcal{L}w=\mathrm{F}_a.
\end{equation}
The celebrated Fredholm theory applied to (\ref{eq:fredholm}) gives
the desired results in the theorem.
\end{proof}

\begin{rem}
In order to guarantee the existence of solutions to
(\ref{eq:cloaking Helmholtz}), we impose the following compatibility
condition on the interior source/sink $f_a$,
\begin{equation}
\langle v,f_a \rangle=0,
\end{equation}
where $v$ is any solution to (\ref{eq:adjoint}).
\end{rem}

Concerning the solution to the cloaking problem (\ref{eq:cloaking
Helmholtz}), we have the following qualitative observations
\begin{rem}\label{rem:necessity}
Let $u\in H^1_{[A_e,B_e]}(\mathbf{B}_2)^m$ be a solution to
(\ref{eq:cloaking Helmholtz}). Set $\gamma^+$ and $\gamma^-$ be the
two sided trace operators on the cloaking $\mathbf{S}_1$. Then, by
(\ref{eq:e03}), we see
\begin{align*}
&\int_{\mathbf{S}_1}[\sum_{\alpha,\beta\leq 1}\nu_\alpha
A_e^{\alpha\beta}\partial_\beta u]\
dS\\
=&\int_{\mathbf{S}_1}[\gamma^+(\sum_{\alpha,\beta\leq 1}\nu_\alpha
A_e^{\alpha\beta}\partial_\beta u)-\gamma^-(\sum_{\alpha,\beta\leq
1}\nu_\alpha A_e^{\alpha\beta}\partial_\beta u)]\ dS\\
=&\ 0
\end{align*}
On the other hand, we see that generically one has
\[
[u]|_{\mathbf{S}_1}:=\gamma^+ u-\gamma^- u=\mbox{constant
vector}\neq 0,
\]
since otherwise we would have an over-determined system
(\ref{eq:thm12})--(\ref{eq:thm13}). This observation also
encompasses the necessity of introducing the finite energy solutions
other than spatial $H^1$-solutions to the singular cloaking problem
(\ref{eq:cloaking Helmholtz}).
\end{rem}

\section{General invisibility cloaking}

Up till now, we have considered invisibility cloaking where the
cloaked region is the unit disc $\mathbf{B}_1$, and the cloaking
layer is the annulus $\mathbf{B}_2\backslash\bar{\mathbf{B}}_1$. In
this section, we extend our study to the general cloaking. To that
end, let $G:\mathbf{B}_2\rightarrow\Omega$ be an
orientation-preserving and bi-Lipschitz mapping and let
$D=G(\mathbf{B}_1)$. Then, for $F$ in (\ref{eqn:F}), let
\[
\mathbf{K}=G\circ F\circ G^{-1}:\ \Omega\rightarrow \Omega,
\]
which blows up the point $G(\{0\})$ to $D$ within $\Omega$ while
keeps $\partial\Omega$ fixed. Now, we shall show

\begin{thm}\label{thm:general}
Let $\{\Omega; \mathbf{A_b, B_b, f_b}\}$ with $\mathbf{f}_b$
supported away from $G(\{0\})$ be the regular reference/background
space. Then
\[
\{\Omega\backslash\bar{D}; \mathbf{A_c, B_c,
f_c}\}=\mathbf{K}_*\{\Omega\backslash G(\{0\}); \mathbf{A_b, B_b,
f_b}\}
\]
is an invisibility cloaking device for the region $D$ with respect
to the reference space $\{\Omega; \mathbf{A_b, B_b,f_b}\}$. That is,
for any extended object
\[
\{\Omega;\mathbf{A_e,B_e, f_e}\}=\begin{cases}
\{\Omega\backslash\bar{D}; \mathbf{A_c,B_c,f_c}\}\ &\mbox{in
\ $\Omega\backslash\bar{D}$}, \\
\{D; \mathbf{A_a,B_a,f_a}\}\ &\mbox{in $\ D$},
\end{cases}
\]
where $\{D; \mathbf{A_a, B_a}\}$ is an arbitrary regular medium and
$\mathbf{f_a}\in {\widetilde H}^{-1}(D)^m$, we have
\begin{equation}\label{eq:equalcloaking general}
\Lambda_{\mathbf{A_e,B_e,f_e}}^\omega=\Lambda_{\mathbf{A_b,B_b,f_b}}^\omega.
\end{equation}
\end{thm}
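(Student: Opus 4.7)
The plan is to reduce Theorem~\ref{thm:general} to the already-established radial result, Theorem~\ref{thm:1}, by using the bi-Lipschitz diffeomorphism $G$ as a change of coordinates that straightens the general cloaking setup to the radial one. Concretely, I would pull back the entire geometry and the extended medium on $\Omega$ via $G^{-1}$ to obtain an extended medium on $\mathbf{B}_2$. Since $\mathbf{K} = G\circ F\circ G^{-1}$, the functoriality of push-forwards yields $(G^{-1})_*\mathbf{K}_* = F_*(G^{-1})_*$ on $\Omega\setminus G(\{0\})$. Hence if I set
\[
\{\mathbf{B}_2; A_b, B_b, f_b\} := (G^{-1})_*\{\Omega; \mathbf{A_b, B_b, f_b}\},\qquad \{\mathbf{B}_1; A_a, B_a, f_a\} := (G^{-1})_*\{D;\mathbf{A_a,B_a,f_a}\},
\]
then the pulled-back cloak on the annulus is exactly $F_*\{\mathbf{B}_2\setminus\{0\}; A_b,B_b,f_b\}$, i.e.\ it matches the radial cloaking medium from Section~2.

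Next I would verify that this pullback preserves regularity and the variational framework. Because $G$ is orientation-preserving and bi-Lipschitz, its Jacobian $DG$ exists a.e.\ with $\det(DG)$ bounded away from $0$ and $\infty$; consequently the algebraic conditions (\ref{eq:algebraic})--(\ref{eq:algebraic2}) are preserved under $(G^{-1})_*$, so $\{\mathbf{B}_2; A_b, B_b\}$ and $\{\mathbf{B}_1; A_a, B_a\}$ are regular media in the sense of Definition~\ref{def:cloaking device}. Moreover, composition with $G$ and $G^{-1}$ is a bounded isomorphism on $H^1$, on $H^{1/2}$ of the boundary, and on $\widetilde H^{-1}$, so source terms and Dirichlet data transfer cleanly. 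A bi-Lipschitz analogue of Lemma~\ref{lem:trans opt} (which is purely an a.e.\ change-of-variables computation) then gives $\mathcal{Q}_{[\mathbf{A_e,B_e}]}(u,v) = \mathcal{Q}_{[(G^{-1})_*\mathbf{A_e},(G^{-1})_*\mathbf{B_e}]}(u\circ G, v\circ G)$ for all $u,v\in H^1(\Omega)^m$, and analogously in the singularly weighted spaces after composition with $G$ also transports $H^1_{[\mathbf{A_e},\mathbf{B_e}]}(\Omega)^m$ isomorphically onto $H^1_{[A_e,B_e]}(\mathbf{B}_2)^m$.

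With these identifications in place, Theorem~\ref{thm:1} applied to the pulled-back data gives
\[
\Lambda^\omega_{A_e,B_e,f_e} \;=\; \Lambda^\omega_{A_b,B_b,f_b}\qquad\text{on }\mathbf{S}_2.
\]
I would then push forward via $G$: since $G|_{\partial\mathbf{B}_2}:\mathbf{S}_2\to\partial\Omega$ is a bi-Lipschitz boundary identification and $u\mapsto u\circ G^{-1}$ is an isomorphism $H^{1/2}(\mathbf{S}_2)^m\to H^{1/2}(\partial\Omega)^m$ whose dual implements the DtN map in the pushed-forward medium (this is the content of the bi-Lipschitz version of Lemma~\ref{lem:trans opt} applied to the definition (\ref{eq:dtn2}) together with the variational formulation (\ref{eq:var form})), we obtain $\Lambda^\omega_{\mathbf{A_e,B_e,f_e}} = \Lambda^\omega_{\mathbf{A_b,B_b,f_b}}$ on $\partial\Omega$, as required.

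The main obstacle will be rigorously justifying that every step in Sections~2--3 survives when the outer diffeomorphism $G$ is merely bi-Lipschitz rather than smooth. Specifically, I need to confirm that the singularly weighted space $H^1_{[\mathbf{A_e,B_e}]}(\Omega)^m$ is well-defined intrinsically (via the natural analogue of (\ref{eq:weighted space}) for test functions whose angular derivative along the image $\mathbf{K}(G(\mathbf{S}_1))=G(\mathbf{S}_1)$ of the cloaking interface vanishes), that Lemma~\ref{lem:measurable}, Lemma~\ref{lem:characterization} and Theorem~\ref{thm:2} transport under $G$, and that the compatibility condition (\ref{eq:thm13}) in the cloaked region $D$ matches the one in $\mathbf{B}_1$ after change of variables. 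Each of these is a routine but not entirely automatic bi-Lipschitz extension of the arguments already given; once these transport statements are in hand, the theorem follows immediately from Theorem~\ref{thm:1}.
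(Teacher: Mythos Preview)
Your proposal is correct and matches the paper's own proof essentially step for step: the paper also pulls back via $G^{-1}$ to define $\{\mathbf{B}_2;A_b,B_b,f_b\}=(G^{-1})_*\{\Omega;\mathbf{A_b,B_b,f_b}\}$ and $\{\mathbf{B}_1;A_a,B_a,f_a\}=(G^{-1})_*\{D;\mathbf{A_a,B_a,f_a}\}$, observes that bi-Lipschitz $G$ preserves regularity and the singularity structure, identifies $H^1_{[\mathbf{A_e,B_e}]}(\Omega)^m$ with $H^1_{[A_e,B_e]}(\mathbf{B}_2)^m$ via $G^*$, invokes the radial decoupling (Theorem~\ref{thm:2}) and then concludes as in Theorem~\ref{thm:1}. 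The only cosmetic difference is that the paper cites Theorem~\ref{thm:2} and then says ``by a similar argument to that for the proof of Theorem~\ref{thm:1}'', whereas you invoke Theorem~\ref{thm:1} directly; and the paper treats the bi-Lipschitz transport points you flag as ``obstacles'' as routine without further comment.
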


\begin{proof}
Let
\begin{equation}
\{\mathbf{B}_2; A_b, B_b, f_b\}:=(G^{-1})_*\{\Omega; \mathbf{A_b,
B_b, f_b}\},
\end{equation}
and
\begin{equation}
\begin{split}
\{\mathbf{B}_2\backslash\bar{\mathbf{B}}_1; A_c, B_c, f_c\}=&(F\circ
G^{-1})_*\{\Omega\backslash G(\{0\}); \mathbf{A_b, B_b, f_b}\}\\
=& F_*\{\mathbf{B}_2\backslash\{0\}; A_b, B_b, f_b\}.
\end{split}
\end{equation}
Then,
\[
\{\Omega\backslash\bar{D}; \mathbf{A_c, B_c,
f_c}\}=G_*\{\mathbf{B}_2\backslash\bar{B}_1; A_c, B_c, f_c\}.
\]
Since $G$ is orientation-preserving and bi-Lipschitz, we know
$\{\mathbf{B}_2;A_b,B_b,f_b\}$ is a regular space and, $\mathbf{A_c,
B_c}$ have same singularities on the cloaking interface $\partial
D^+$ as of $A_c, B_c$ on $\mathbf{S}_1^+$. In order to stick close
to our discussion in Sections 2 and 3, we introduce
\[
\{\mathbf{B}_1; A_a, B_a, f_a\}=(G^{-1})_*\{D; \mathbf{A_a, B_a,
f_a}\},
\]
and define $\{\mathbf{B}_2; A_e, B_e, f_e\}$ correspondingly. Now we
introduce the singulary weighted Sobolev space
$H^1_{[\mathbf{A_e,B_e,f_e}]}(\Omega)^m$ similarly to
$H^1_{[A_e,B_e]}(\mathbf{B}_2)^m$. The solution to the cloaking
problem,
\begin{equation}\label{eq:f01}
\mathcal{P}_{[\mathbf{A_e,B_e}]}\mathbf{u}=\mathbf{f_e}\ \ \mbox{on
\ $\Omega$},\ \ \mathbf{u}|_{\partial\Omega}=\mathbf{h}
\end{equation}
can be defined in a similar manner as in (\ref{eq:distributional
equation}). It is readily seen that $\mathbf{u}\in
H^1_{[\mathbf{A_e, B_e}]}(\Omega)^m$ is a solution to (\ref{eq:f01})
if{f} $u:=G^*\mathbf{u}\in H^1_{[A_e,B_e]}(\mathbf{B}_2)$ is a
solution to
\begin{equation}
\mathcal{P}_{[A_e,B_e]}u=f_e\ \ \mbox{on\ $\mathbf{B}_2$},\ \
u|_{\mathbf{S}_2}=h
\end{equation}
where $h=(G|_{\mathbf{S}_2})^* \mathbf{h}$. By Theorem~\ref{thm:2},
we know $\tilde
v(x)=(F^*(u|_{\mathbf{B}_2\backslash\bar{\mathbf{B}}_1}))^e \in
H^1(\mathbf{B}_2)^m$ is satisfies
\begin{equation}\label{eq:f11}
\mathcal{P}_{[A_b,B_b]}\tilde v=f_b\ \ \mbox{on\ $\mathbf{B}_2$},\ \
\tilde v|_{\mathbf{S}_2}=h,
\end{equation}
whereas $w=u|_{\mathbf{B}_1}\in H^1(\mathbf{B}_1)^m$ is a solution
to
\begin{equation}\label{eq:f12}
\mathcal{P}_{[A_a,B_a]}w=f_a\ \ \mbox{on\ $\mathbf{B}_1$},\ \
w|_{\mathbf{S}_1}={c}_0,
\end{equation}
with ${c}_0\in\mathbb{C}^m$ a constant vector determined by
\begin{equation}\label{eq:f13}
\int_{\mathbf{S}_1}\sum_{\alpha,\beta\leq 1}\nu_\alpha
A_a^{\alpha\beta}\partial_\beta w\ dS=0.
\end{equation}
Therefore, we know $\tilde{\mathbf{v}}:=(G^{-1})^*\tilde
v=(\mathbf{K}^*(\mathbf{u}|_{\Omega\backslash\bar{D}}))^e\in
H^1(\Omega)^m$ satisfies
\begin{equation}
\mathcal{P}_{[\mathbf{A_b, B_b}]}\tilde{\mathbf{v}}=\mathbf{f_b}\ \
\mbox{on\ $\Omega$},\ \
\tilde{\mathbf{v}}|_{\partial\Omega}=\mathbf{h},
\end{equation}
whereas $\mathbf{w}=:(G^{-1})^*w=\mathbf{u}|_{D}\in H^1(D)^m$ is a
solution to
\[
\mathcal{P}_{[\mathbf{A_a},\mathbf{B_a}]}\mathbf{w}=\mathbf{f_a}\ \
\mbox{on\ $D$},\ \ \mathbf{w}|_{D}=\mathbf{c}_0,
\]
with $\mathbf{c}_0\in\mathbb{C}^m$ a constant vector determined by
\[\int_{\partial D}\sum_{\alpha,\beta\leq 1}\nu_\alpha
\mathbf{A_a}^{\alpha\beta}\partial_\beta \mathbf{w}\ dS=0.
\]

Finally, by a similar argument to that for the proof of
Theorem~\ref{thm:1}, one can show (\ref{eq:equalcloaking general}).
The proof is completed.
\end{proof}

\section{Discussion}

In this paper, we investigate the invisibility cloaking for a
general system of PDEs in $\mathbb{R}^2$. The material parameters
possess transformation properties, which allow the construction of
cloaking media by singular push-forward transformations. The
cloaking media are inevitably singular resulting in singular PDEs.
Thus one needs to be careful in defining the meaning of solutions to
the singular cloaking problems. In \cite{GKLU} the notion of finite
energy solutions was introduced which is a meaningful physical
condition because it relates to the energy of the waves. We follow
their approach to treat more singular PDEs in the present paper. By
studying the finite energy solution, we decouple the cloaking
problem with one in the cloaking region having the same DtN operator
as that in the background/reference space, and the other one in the
cloaked region possessing some `hidden' boundary condition on the
interior cloaking interface. On the other hand, we would like to
point out that this is not the unique way of defining solutions to
the singular cloaking problems. An alternative treatment is provided
in \cite{HetmaniukLiuZhou} by the authors to split the finite energy
solutions space directly and hence decouple the cloaking problem
into the cloaked region and cloaking region automatically. In order
to achieve more physical insights into these solutions, one might
relate them to solutions from regularized approximate cloaking.

\end{document}